\newtheorem{theorem}{Theorem}
\newtheorem{lemma}{Lemma}
\newtheorem{proposition}{Proposition}
\def\XXint#1#2#3{{\setbox0=\hbox{$#1{#2#3}{\int}$ }
\vcenter{\hbox{$#2#3$ }}\kern-.6\wd0}}
\begin{document}
\title[THE FREQUENCY  FUNCTION AND ITS CONNECTIONS]
      {The frequency  function and its connections to the Lebesgue points and the Hardy-Littlewood maximal function}
      
\author{Faruk Temur}
\address{Department of Mathematics\\
         Izmir Institute of Technology, Urla, Izmir, 35430,
         Turkey}
\email{faruktemur@iyte.edu.tr}

\keywords{Hardy-Littlewood maximal function,  Frequency function, Lebesgue points }
\subjclass[2010]{Primary: 42B25; Secondary: 46E35}
\date{January 04, 2019}    

\begin{abstract}
The aim of this work is to extend the recent work of the author on the discrete frequency function to the more delicate  continuous frequency function $\mathcal{T}$, and further to investigate its relations to the Hardy-Littlewood maximal function $\mathcal{M}$, and to the Lebesgue points. We surmount the intricate issue of measurability of $\mathcal{T}f$ by  approaching it with a sequence of carefully constructed auxiliary functions for which measurability is easier to prove. After this we give analogues of the recent results on the discrete frequency function. We then   connect the  points of discontinuity of $\mathcal{M}f$ for $f$ simple  to  the zeros of  $\mathcal{T}f$, and to the non-Lebesgue points of $f$. 
\end{abstract}

\maketitle

\section{Introduction}\label{intro}

Let $\mathbb{R}$ be the set of  real numbers, and let $\mathbb{R}^+$ denote the set of positive real numbers. Let $f \in L^{1}(\mathbb{R})$.  We define  the average of $f$ over an  interval of radius $r\in \mathbb{R}^+$ centered at $x\in \mathbb{R}$ by
\begin{equation*}
\mathcal{A}_rf(x):= \frac{1}{2r}\int_{-r}^{r} f(x+y)dy. 
\end{equation*}
 These averages can be regarded as a function of two variables $(x,r)\in \mathbb{R}\times \mathbb{R}^+$, given by \begin{equation*}
\mathcal{A}f(x,r):=\mathcal{A}_rf(x), 
\end{equation*}
 and this gives an extension of the function $f$ to the upper half plane.  
 The  Hardy-Littlewood maximal function is then given as   
\begin{equation*}\label{dm}
\mathcal{M}f(x):=\sup_{r\in \mathbb{R}^+}\mathcal{A}_r|f|(x).
\end{equation*}

 We aim to study the distribution of the values  $r$ for which $\mathcal{M}f(x)=\mathcal{A}_r|f|(x)$. To this end we  define the sets
 \begin{equation*}
 E_{f,x}:=\{r>0:\mathcal{M}f(x)=\mathcal{A}_r|f|(x)\},
 \end{equation*}
 and  the  frequency  function 
 \begin{equation*}\label{f1}
 \mathcal{T}f(x):=\begin{cases}\inf  E_{f,x} & \text{if the set is nonempty} \\ 0 & \text{otherwise.}\end{cases}
 \end{equation*}
 Clearly this function is  well defined. Two properties emerge directly from this definition:   if the infimum of the set in the definition is greater than zero, then it belongs to the set, and if  $\mathcal{T}f(x)=0$, then there is a  sequence  of radii $\{r_n\}_{n\in \mathbb{N}}$  such that $r_n\rightarrow 0$ and   $\mathcal{A}_{r_n}|f|(x)\rightarrow  \mathcal{M}f(x)$  as $n\rightarrow \infty$. These two properties show that the two cases in the definition are intimately connected. In the next section  we will prove these two properties, and also illustrate the behavior of $\mathcal{T}f$ by calculating it  for certain functions $f$. We will observe that although the large scale behavior of $\mathcal{T}f$ is similar to the discrete case investigated in \cite{t2},  the local behavior can be much more complicated due to the possible fractal structure of $f$.   
 
 The motivation for our study of this frequency function comes from the  works \cite{ku, lu1, lu2}.  In \cite{ku} a classification  of the local maxima of $\mathcal{M}f$ based on the values of $\mathcal{T}f$ was used to great effect. In that work Kurka answered in positive a question raised by Hajlasz and Onninen in \cite{ho}: is $f \mapsto \nabla Mf$  a bounded operator from   $W^{1,p}(\mathbb{R}^n)$ to $L^1(\mathbb{R}^n)$? Indeed he    obtains the stronger result that the variation of  $\mathcal{M}f$  is at most a constant times the variation of $f$.
 In  \cite{lu1,lu2} the set $E_{f,x}$ and its variants are defined and used to prove that   $f \mapsto \nabla Mf$  is a continuous  operator on    $W^{1,p}(\mathbb{R}^n)$.  In this vein we also would like to point out the work \cite{s}, which defines a function similar to our frequency function, and using it  characterizes the sine function.     
    
Observing   the  values of $\mathcal{T}f$ is very much like  expressing a function as its Fourier series, for if  around a point  the function is more steep, then we expect $\mathcal{T}f$ to be small, if it is more dispersed then we expect $\mathcal{T}f$ to be large, and this is the exact opposite of the Fourier case. This analogy is the reason we  call $\mathcal{T}f$ the frequency function. This analogy can be seen as a part of more general and well-known connections between 
maximal functions and oscillatory integrals articulated in such works as \cite{l,st}.  We hope understanding the frequency function will contribute to the study of such connections.
Also,    as a more immediate motivation, we  hope to extract information about the Hardy-Littlewood maximal function and the  Lebesgue points using the frequency function, and our last two theorems show that this actually is possible.

Our first result is that the images under the  frequency function are measurable. This result is the key to the others, as it allows us to investigate measure theoretic properties further. It is very natural to expect the frequency function to be measurable, since it is defined using other measurable functions, however a rigorous proof turns out to be  delicate.
 \begin{theorem}
 Let  $f \in L^1(\mathbb{R})$. The function $\mathcal{T}f$ is Lebesgue measurable.
 \end{theorem}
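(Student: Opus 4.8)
The plan is to express $\mathcal{T}f$ as a suitable limit of functions whose measurability is transparent, and then invoke the fact that a pointwise limit of measurable functions is measurable. The obstacle is that $E_{f,x}$ is defined by an exact equality $\mathcal{M}f(x)=\mathcal{A}_r|f|(x)$, and for fixed small radii this set can be empty at $x$ but nonempty at nearby points, so the map $x \mapsto \mathcal{T}f(x)$ has no obvious continuity or monotonicity. To get around this I would first record the joint continuity of $(x,r)\mapsto \mathcal{A}f(x,r)=\mathcal{A}_r|f|(x)$ on $\mathbb{R}\times\mathbb{R}^+$ for $f\in L^1$ (this is elementary: translations are continuous in $L^1$ and $r\mapsto 1/2r$ is continuous), and the lower semicontinuity of $\mathcal{M}f$ as a supremum of these continuous functions. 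Joint continuity of $\mathcal{A}f$ means that for each fixed $x$ the set $E_{f,x}$ is closed in $\mathbb{R}^+$; hence whenever $E_{f,x}\neq\emptyset$ its infimum, if positive, is attained, which is the first of the two properties promised in the text.

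Next I would introduce, for each $\delta>0$ and each $n\in\mathbb{N}$, an auxiliary function that relaxes the equality to a two-sided approximate equality on the scale $\delta$: roughly,
\begin{equation*}
\mathcal{T}_\delta f(x):=\inf\Bigl\{r>0:\ \mathcal{A}_r|f|(x)\ \ge\ \mathcal{M}f(x)-\delta\Bigr\},
\end{equation*}
with the convention that the infimum of the empty set is, say, $+\infty$ (or truncated at a large constant). The point of this relaxation is that the defining condition $\mathcal{A}_r|f|(x)\ge \mathcal{M}f(x)-\delta$ involves the continuous function $\mathcal{A}f$ and the lower semicontinuous function $\mathcal{M}f$, so the set $\{(x,r):\mathcal{A}_r|f|(x)\ge \mathcal{M}f(x)-\delta\}$ is a Borel (indeed, relatively closed after intersecting with $\{\mathcal{M}f<\infty\}$) subset of $\mathbb{R}\times\mathbb{R}^+$; taking the infimum in $r$ over sections of such a set produces a measurable function of $x$ by a standard projection/section argument. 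Concretely, $\{x:\mathcal{T}_\delta f(x)<t\}=\{x:\exists\, r\in(0,t)\text{ rational},\ \mathcal{A}_r|f|(x)>\mathcal{M}f(x)-\delta\}\cup(\text{a lower-semicontinuity correction})$, which is a countable union of measurable sets; here I would use joint continuity to replace the closed condition by an open one along a countable dense set of radii without changing the infimum.

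Finally I would show $\mathcal{T}_\delta f(x)\to \mathcal{T}f(x)$ as $\delta\to 0^+$, pointwise in $x$, distinguishing the two cases in the definition of $\mathcal{T}f$. If $\mathcal{T}f(x)=r_0>0$, then $r_0\in E_{f,x}$ by the attainment property, so $\mathcal{T}_\delta f(x)\le r_0$ for all $\delta$; for the reverse inequality one uses that for $r<r_0$ one has $\mathcal{A}_r|f|(x)<\mathcal{M}f(x)$ strictly, and by joint continuity and compactness of $[\varepsilon,r_0-\varepsilon]$ the deficit $\mathcal{M}f(x)-\mathcal{A}_r|f|(x)$ is bounded below by some $\delta(\varepsilon)>0$ there, forcing $\mathcal{T}_\delta f(x)\ge r_0-\varepsilon$ once $\delta<\delta(\varepsilon)$. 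If $\mathcal{T}f(x)=0$ because $E_{f,x}\neq\emptyset$ with infimum $0$, or because of the approximating sequence $r_n\to 0$ with $\mathcal{A}_{r_n}|f|(x)\to \mathcal{M}f(x)$, then in either case there are arbitrarily small $r$ with $\mathcal{A}_r|f|(x)\ge\mathcal{M}f(x)-\delta$, so $\mathcal{T}_\delta f(x)\to 0$ as well. Thus $\mathcal{T}f=\lim_{\delta\to 0^+}\mathcal{T}_\delta f$ is a pointwise limit (along $\delta=1/k$, $k\to\infty$) of measurable functions, hence measurable. The main difficulty I anticipate is handling the borderline case $\mathcal{T}f(x)=0$ cleanly and making sure the relaxed infimum genuinely decreases to $0$ there rather than to some spurious positive value; this is exactly where the second property from the text — the existence of a minimizing sequence of radii tending to $0$ — is indispensable.
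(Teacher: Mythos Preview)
Your relaxation strategy is the right idea and matches the paper's approach in spirit, but there is a genuine gap in the convergence step. In the case $\mathcal{T}f(x)=r_0>0$ you argue that the deficit $\mathcal{M}f(x)-\mathcal{A}_r|f|(x)$ is bounded below on the compact interval $[\varepsilon,r_0-\varepsilon]$, and conclude $\mathcal{T}_\delta f(x)\ge r_0-\varepsilon$ for small $\delta$. But your $\mathcal{T}_\delta f(x)$ is an infimum over \emph{all} $r>0$, so you must also rule out radii $r\in(0,\varepsilon)$ satisfying $\mathcal{A}_r|f|(x)\ge \mathcal{M}f(x)-\delta$. Nothing in the hypothesis $\mathcal{T}f(x)=r_0>0$ prevents $\limsup_{r\to 0}\mathcal{A}_r|f|(x)=\mathcal{M}f(x)$: it only says the supremum is not \emph{attained} on $(0,r_0)$. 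When that $\limsup$ equals $\mathcal{M}f(x)$, your relaxed set contains arbitrarily small radii for every $\delta>0$, so $\mathcal{T}_\delta f(x)=0$ for all $\delta$ and the limit is $0\neq r_0$.

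This scenario is not pathological. Take the even function
\[
f(t)=(1-2|t|)\,\chi_{[-1/2,1/2]}(t)+\tfrac{3}{2}\,\chi_{[-1,-1/2]\cup[1/2,1]}(t).
\]
At $x=0$ one computes $\mathcal{A}_r f(0)=1-r$ for $r\le 1/2$, $\mathcal{A}_r f(0)=3/2-1/(2r)$ for $r\in[1/2,1]$, and $\mathcal{A}_r f(0)=1/r$ for $r\ge 1$. Thus $\mathcal{M}f(0)=1$, attained only at $r_0=1$, so $\mathcal{T}f(0)=1$; yet $\mathcal{A}_r f(0)\to 1$ as $r\to 0$, so $\{r>0:\mathcal{A}_r f(0)\ge 1-\delta\}\supseteq(0,\delta]$ and $\mathcal{T}_\delta f(0)=0$ for every $\delta>0$.

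The paper repairs exactly this by introducing a second parameter: it works with
\[
E_{f,x,k,l}=\{r\in\mathbb{Q}^+:\ r\ge 2^{-l},\ \mathcal{A}_r|f|(x)+2^{-k}\ge \mathcal{M}f(x)\},
\]
proves measurability of $\mathcal{T}_{k,l}f=\inf E_{f,x,k,l}$ via countability, and then takes the limit $k\to\infty$ \emph{first} with $l$ fixed (so the lower cutoff $2^{-l}$ blocks the spurious small radii and forces the limit $\mathcal{T}_l f(x)$ to land in $E_{f,x}$), and only afterwards lets $l\to\infty$. Your one-parameter scheme collapses these two limits into one and loses exactly the information needed in Case~II. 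If you insert a lower cutoff $r\ge\eta$ into your $\mathcal{T}_\delta$, send $\delta\to 0$ first and then $\eta\to 0$, your argument goes through and becomes essentially the paper's proof.
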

  
 With this theorem at hand we will move to further investigation, and prove level set  estimates. As a function $f \in L^1(\mathbb{R})$ contains most of its mass in an interval of finite radius centered at the origin, for large $|x|$ 
  it is most natural to expect  $\mathcal{T}f(x)$ to be like $|x|$. If however the mass of the function is dispersed sparsely over the real line, as is the case for the function we will introduce to prove Theorem 4, the frequency function can deviate from $|x|$ for some $x$. The next section supplies further examples illustrating both of these situations. But how often can $\mathcal{T}f(x)$ deviate from $|x|$ for an arbitrary function? The next three theorems explore this issue.

\begin{theorem}
Let  $f \in L^1(\mathbb{R})$. Let $C>1$ be a  real number. Then the set
\begin{equation*}\label{sc}
\left\{x\in \mathbb{R}:
\frac{|x|}{2C}\leq\mathcal{T}f(x) \leq \frac{|x|}{C}\right\}
\end{equation*}
 is   bounded.
\end{theorem}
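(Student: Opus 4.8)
The plan is to show that no sequence $x_n \to +\infty$ (and, by reflection, none with $x_n \to -\infty$) can lie in the displayed set, by playing off against each other two estimates for $\mathcal{M}f(x_n)$ coming from two very different radii. First I would dispose of the degenerate case $\|f\|_{L^1}=0$: then $\mathcal{M}f\equiv 0$, so $E_{f,x}=(0,\infty)$, hence $\mathcal{T}f\equiv 0$, and the displayed set is contained in $\{0\}$. So assume $m:=\tfrac12\|f\|_{L^1}>0$ and fix once and for all a radius $R>0$ with $\int_{-R}^{R}|f|\ge m$.

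Next, take $x>0$ in the displayed set (the case $x<0$ being identical after reflecting). Since $\mathcal{T}f(x)\ge x/(2C)>0$, the first of the two elementary properties recorded after the definition of $\mathcal{T}f$ guarantees that $r:=\mathcal{T}f(x)$ actually belongs to $E_{f,x}$, so $\mathcal{M}f(x)=\mathcal{A}_r|f|(x)=\frac{1}{2r}\int_{x-r}^{x+r}|f|$. For an \emph{upper} bound I would use both inequalities $x/(2C)\le r\le x/C$: the left one gives $\frac{1}{2r}\le C/x$, and the right one — here $C>1$ is crucial — gives $x-r\ge x(C-1)/C>0$, so the interval of integration sits inside $[x(C-1)/C,\infty)$ and therefore
\[
\mathcal{M}f(x)\ \le\ \frac{C}{x}\int_{x(C-1)/C}^{\infty}|f|.
\]
For a \emph{lower} bound I would instead test $\mathcal{M}f$ with the much larger radius $s=x+R$, which recaptures the mass near the origin:
\[
\mathcal{M}f(x)\ \ge\ \mathcal{A}_{x+R}|f|(x)\ =\ \frac{1}{2(x+R)}\int_{-R}^{2x+R}|f|\ \ge\ \frac{m}{2(x+R)}.
\]
Combining the two bounds yields $\int_{x(C-1)/C}^{\infty}|f|\ \ge\ \tfrac{m x}{2C(x+R)}$ for every such $x$; letting $x\to+\infty$, the right side tends to $m/(2C)>0$ while the left side tends to $0$ since $f\in L^1(\mathbb{R})$ and $x(C-1)/C\to\infty$. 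Hence the inequality fails for all large $x$, so the displayed set meets $(0,\infty)$ in a bounded set, and the reflected argument handles $(-\infty,0)$.

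I do not expect a serious technical obstacle here; the one idea that matters is the choice of the two radii. The distinguished radius $r=\mathcal{T}f(x)$ is the one at which, by definition, the maximal average is attained, and because $r$ is comparable to $x$ the averaging interval $[x-r,x+r]$ has drifted off to infinity, so the $L^1$ tail forces $\mathcal{M}f(x)=o(1/x)$; on the other hand a radius of order $x$ centered at $x$ still captures a fixed positive fraction of $\|f\|_{L^1}$ and thus forces $\mathcal{M}f(x)\gtrsim 1/x$. The only point requiring a little care is to keep track of where $C>1$ is used — namely to ensure that $x-r$ stays positive and grows linearly in $x$, which is exactly what makes the vanishing of the tail integral do its work.
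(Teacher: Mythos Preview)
Your proof is correct, and it rests on the same two estimates as the paper's: a lower bound $\mathcal{M}f(x)\gtrsim 1/x$ from a radius of order $x$ that swallows most of the $L^1$ mass, and an upper bound coming from the fact that the optimal averaging interval $[x-\mathcal{T}f(x),\,x+\mathcal{T}f(x)]$ sits inside $[x(C-1)/C,\infty)$. The difference lies in how the contradiction is extracted. The paper pulls out an infinite sequence $x_i\in S_C$ spaced so that the intervals $[Dx_i,Bx_i]$ (with $D=(C-1)/C$, $B=(C+1)/C$) are pairwise disjoint, bounds each $\tfrac{x_i}{C}\mathcal{M}f(x_i)$ by $\int_{Dx_i}^{Bx_i}|f|$, and sums to get $\sum_i \|f\|_1/(8C)\le \|f\|_1$. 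You instead keep a single $x$, bound $\mathcal{M}f(x)$ above by $(C/x)\int_{x(C-1)/C}^\infty |f|$, and let $x\to\infty$ so that the tail integral vanishes while the lower bound stays bounded away from zero. Your route is more economical: it avoids the bookkeeping of selecting a separated subsequence and the disjointness check, and it uses only the single consequence of $f\in L^1$ that $\int_T^\infty|f|\to 0$. The paper's argument, on the other hand, mirrors the discrete version in \cite{t2} and makes the quantitative structure (how many well-separated bad points there can be) slightly more visible.
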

Our next theorem is a deeper result that gives information about the density of  level sets of a somewhat different type. We note that in this theorem and for the rest of the paper,  $|E|$ will denote the Lebesgue measure of a set $E$, and $\#E$ will denote its cardinality.

\begin{theorem}
Let  $f \in L^1(\mathbb{R})$ be a function that is not almost everywhere zero. Let $C>1$ be a real number, and  let $N\in \mathbb{N}$. Then
\begin{equation*}\label{kcn1}
\lim_{N\rightarrow \infty} \frac{ \left|\left\{x\in \mathbb{R}:|x|\leq N, \ 
\mathcal{T}f(x) \leq \frac{|x|}{C}\right\}\right|} {N}=0.
\end{equation*}
\end{theorem}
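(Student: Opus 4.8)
The plan is to use the two elementary properties of $\mathcal{T}f$ recorded just after its definition to show that on the set in question the Hardy--Littlewood maximal function is \emph{localized}, and then to play off a compactly supported piece of $f$ against an $L^1$-small tail, finishing with the weak type $(1,1)$ estimate for $\mathcal{M}$.

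First I would set $S_N=\{x\in\mathbb{R}:|x|\le N,\ \mathcal{T}f(x)\le|x|/C\}$, which is measurable by Theorem 1, and observe that for every $x\in S_N\setminus\{0\}$ one has
\[
\mathcal{M}f(x)=\sup_{0<r\le|x|/C}\mathcal{A}_r|f|(x).
\]
If $\mathcal{T}f(x)>0$ this is because $\mathcal{T}f(x)\in E_{f,x}$ and $\mathcal{T}f(x)\le|x|/C$; if $\mathcal{T}f(x)=0$ it follows from the existence of radii $r_n\to0$ with $\mathcal{A}_{r_n}|f|(x)\to\mathcal{M}f(x)$, which eventually satisfy $r_n\le|x|/C$. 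The consequence I want from this identity is that for such $x$ the value $\mathcal{M}f(x)$ is determined by $f$ restricted to $[x-|x|/C,x+|x|/C]$, an interval lying in $\{|t|\ge|x|(1-1/C)\}$.

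Next, fixing $\varepsilon,\delta>0$ (and using that $f$ is not a.e.\ zero, so that we may take $\delta<\tfrac12\|f\|_1$), I would choose $a>0$ with $\int_{|t|>a}|f(t)|\,dt<\delta$ and split $f=f_1+f_2$ with $f_1=f\,\mathbbm{1}_{[-a,a]}$, so that $\|f_2\|_1<\delta$ and $\|f_1\|_1>\tfrac12\|f\|_1>0$. For $N$ large enough that $\varepsilon N(1-1/C)>a$ and every $x\in S_N$ with $|x|\ge\varepsilon N$, the interval $[x-|x|/C,x+|x|/C]$ misses $[-a,a]$, so $|f|=|f_2|$ there and the localization identity gives $\mathcal{M}f(x)\le\mathcal{M}f_2(x)$; together with $\mathcal{M}f_1\le\mathcal{M}f+\mathcal{M}f_2$ this yields $\mathcal{M}f_1(x)\le2\mathcal{M}f_2(x)$ on $S_N\cap\{\varepsilon N\le|x|\le N\}$. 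On the other hand, since $f_1$ is supported in $[-a,a]$, taking radius $|x|+a$ shows $\mathcal{M}f_1(x)\ge\|f_1\|_1/(2(|x|+a))\ge\|f_1\|_1/(4N)$ for $|x|\le N$ once $N\ge a$; hence $\mathcal{M}f_2(x)\ge\|f\|_1/(16N)$ on that part of $S_N$, and the weak type $(1,1)$ bound gives
\[
\bigl|S_N\cap\{\varepsilon N\le|x|\le N\}\bigr|\le\bigl|\{\mathcal{M}f_2>\|f\|_1/(16N)\}\bigr|\le\frac{16\,c\,\|f_2\|_1}{\|f\|_1}\,N<\frac{16\,c\,\delta}{\|f\|_1}\,N,
\]
where $c$ is the absolute constant in the weak type inequality.

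Finally, adding the trivial estimate $|S_N\cap\{|x|<\varepsilon N\}|\le2\varepsilon N$, I get $|S_N|/N\le2\varepsilon+16c\delta/\|f\|_1$ for all large $N$, and letting $\varepsilon,\delta\to0$ finishes the proof. I expect the only real subtlety to be the localization step: one must invoke \emph{both} alternatives in the definition of $\mathcal{T}f$, since the set where $\mathcal{T}f(x)=0$ can be large, and it is the second property (approach by radii tending to $0$) that controls it. After that the argument is a standard truncation combined with the weak type estimate, and the hypothesis that $f$ is not a.e.\ zero enters precisely to keep $\|f_1\|_1$ bounded away from $0$.
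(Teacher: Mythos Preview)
Your argument is correct and takes a genuinely different route from the paper's.

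The paper argues by contradiction: assuming $|K_{N_i}^+|/N_i\ge\epsilon$ along a sequence, it extracts a geometrically spaced subsequence, and on each ``annulus'' $K_{N_{i_{2k}}}^+\setminus K_{N_{i_{2k-1}}}^+$ applies inner regularity and the Vitali covering lemma to the intervals $(x-r_x,x+r_x)$ to produce disjoint intervals over which $\int|f|$ is bounded below by a fixed fraction of $\|f\|_1$; summing over $k$ contradicts $f\in L^1$. Your proof is direct: the localization identity $\mathcal{M}f(x)=\sup_{0<r\le|x|/C}\mathcal{A}_r|f|(x)$ on $S_N$ (which you correctly justify by handling both alternatives $\mathcal{T}f(x)>0$ and $\mathcal{T}f(x)=0$ via Propositions~1 and~2) lets you compare $\mathcal{M}f$ with $\mathcal{M}f_2$ for the tail $f_2$, and then a single application of the weak type $(1,1)$ inequality replaces the entire Vitali/disjointness machinery. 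This is cleaner and more quantitative: your argument actually yields $\limsup_N|S_N|/N\lesssim\|f\|_1^{-1}\int_{|t|>a}|f|$ for every $a>0$, which is a rate statement. (A minor simplification: since $|f_1|\le|f|$ pointwise you have $\mathcal{M}f_1\le\mathcal{M}f\le\mathcal{M}f_2$ directly on the relevant set, so the factor $2$ and the sublinearity step are not needed.) The paper's approach, on the other hand, stays closer to the discrete analogue in~\cite{t2} and makes the role of the covering argument explicit.
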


We have the following theorem that makes it clear that it is not possible to improve upon Theorem 3, even by comparing $\mathcal{T}f(x)$ to a function other than $|x|/C$.

\begin{theorem}
	For every  $\varepsilon >0$, there exists a function  $f \in L^1(\mathbb{R})$ such that 
	\begin{equation*}\label{t3}
 {| \left\{x\in \mathbb{R}:|x|\leq N, \ 
		\mathcal{T}f(x) =0\right\}|}  \geq \frac{1}{8}N / \log^{1+\varepsilon}N
	\end{equation*}
	for infinitely many values of $N\in \mathbb{N}$.
\end{theorem}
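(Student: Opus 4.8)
The plan is to construct $f$ explicitly as a sum of narrow bumps placed at a sparse sequence of locations, arranged so that at a positive proportion of integer points $x$ (in the range $|x| \le N$) the supremum defining $\mathcal{M}f(x)$ is \emph{not} attained, forcing $\mathcal{T}f(x) = 0$. The key mechanism: if near a point $x$ the function $|f|$ looks like an ever-steeper spike as $r \to 0$ — more precisely, if there is a sequence $r_n \to 0$ with $\mathcal{A}_{r_n}|f|(x)$ increasing to $\mathcal{M}f(x)$ but no single radius achieving it — then by the second of the two properties noted after the definition of $\mathcal{T}f$, we get $\mathcal{T}f(x) = 0$. The cleanest way to realize this is to make $x$ itself a point of density/concentration: place a sequence of bumps whose supports shrink toward $x$ and whose heights grow, so that smaller and smaller averages centered at $x$ keep beating all larger averages, yet the limiting value is a true supremum never attained.

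Concretely, I would first fix $\varepsilon > 0$ and a rapidly growing sequence $\{N_k\}$, and on each dyadic-type block build a "Cantor-like" or lacunary arrangement of bumps. The count $\tfrac18 N/\log^{1+\varepsilon} N$ suggests the following budget: within $[-N_k, N_k]$ we want roughly $N_k / \log^{1+\varepsilon} N_k$ special integer points, and around each we spend a geometrically decreasing amount of $L^1$-mass — something like mass $c \cdot 2^{-j}$ on the $j$-th bump around a given special point — so the total mass contributed near one special point is $O(1)$, but we cannot afford $O(1)$ per point over $N_k/\log^{1+\varepsilon}N_k$ points if we want $f \in L^1(\mathbb{R})$ globally across all $k$. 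This is where the $\log^{1+\varepsilon} N$ factor does its work: I would let the per-point mass at scale $N_k$ be on the order of $1/(k^2 \cdot (N_k/\log^{1+\varepsilon} N_k))$ — wait, more carefully, the mass per special point should decay in $k$ so that summing $(\text{number of points in block } k) \times (\text{mass per point})$ over all $k$ converges; since the number of points is $\asymp N_k/\log^{1+\varepsilon}N_k$, choosing mass per point $\asymp \log^{1+\varepsilon}(N_k) / (N_k \cdot k^2)$, say, makes the block-$k$ contribution $\asymp 1/k^2$, hence summable. The bound "$\ge \tfrac18 N/\log^{1+\varepsilon}N$ for infinitely many $N$" then just requires that the special-point count in block $k$ is at least (a bit more than) $\tfrac18 N_k/\log^{1+\varepsilon}N_k$, which we arrange by spacing special points $\asymp \log^{1+\varepsilon} N_k$ apart inside $[-N_k,N_k]$.

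The main obstacle, and the step requiring the most care, is verifying that $\mathcal{T}f(x) = 0$ genuinely holds at each designated special point $x$ — i.e., that the supremum in $\mathcal{M}f(x)$ is approached only as $r \to 0$ and is not attained at any positive radius. For this I need two things. First, a \emph{lower} estimate: the shrinking bumps around $x$ must be tall enough that $\mathcal{A}_{r_j}|f|(x) \to L$ for some $L > 0$ that strictly exceeds $\mathcal{A}_r|f|(x)$ for every fixed $r$ bounded away from $0$; this is a local computation — if the $j$-th bump around $x$ sits in $[x - 2^{-j}, x - 2^{-j+1}]$ with mass $m$, then $\mathcal{A}_{2^{-j+2}}|f|(x) \gtrsim m \cdot 2^{j}$, so taking bump mass $m_j \asymp 2^{-j}$ keeps these averages comparable to a positive constant, and by fine-tuning the heights I can make the sequence of averages strictly increasing to its sup while each individual average, at any fixed scale, is strictly smaller. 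Second, an \emph{upper} estimate ruling out that some other, far-away configuration of bumps (from other special points or other blocks) produces an average at $x$ that ties or beats $L$: here I use that the total mass is finite and the special points are spread out, so a large-radius average centered at $x$ only sees diluted mass and stays well below $L$; this requires choosing the growth of $\{N_k\}$ fast enough (e.g. $N_{k+1} \gg N_k \cdot 2^{k}$ or faster) that cross-block interference is negligible. Once these local and global estimates are in place, the second property from the definition of $\mathcal{T}f$ immediately gives $\mathcal{T}f(x) = 0$ at all special points, the measure lower bound follows by counting, and $f \in L^1$ follows from the mass budget, completing the proof.
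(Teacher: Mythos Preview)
Your approach is genuinely different from the paper's, and in a way that creates a real problem. You aim to force $\mathcal{T}f(x)=0$ by making the supremum $\mathcal{M}f(x)$ \emph{not attained} at each special point (so $E_{f,x}=\emptyset$). The paper instead makes the supremum \emph{attained for all sufficiently small radii}: it takes
\[
f(x)=\sum_{m\ge 10}\frac{1}{m\log^{1+\varepsilon/2}m}\,\chi_{(m\log^{1+\varepsilon}m,\;m\log^{1+\varepsilon}m+1)}(x),
\]
so that on each unit interval in the support $f$ is \emph{constant}; for $x$ in such an interval every small average equals the local value, and a short comparison shows no larger radius beats it. Thus $E_{f,x}\supseteq(0,\delta)$ and $\inf E_{f,x}=0$. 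This delivers an entire unit interval of points with $\mathcal{T}f=0$ for each $m$, and the measure bound follows by counting these intervals.

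Here is the gap in your plan. You repeatedly speak of ``special integer points'' and say ``the measure lower bound follows by counting,'' but you only argue that $\mathcal{T}f(x)=0$ at those countably many designated centers $x$. That set has Lebesgue measure zero, so it cannot give
\[
\bigl|\{x:|x|\le N,\ \mathcal{T}f(x)=0\}\bigr|\ge \tfrac{1}{8}N/\log^{1+\varepsilon}N.
\]
To salvage your route you would need to show $\mathcal{T}f(y)=0$ for \emph{all} $y$ in an interval of definite length around each special point, and your tower-of-spikes construction, tuned so the supremum is approached only along $r\to 0$ at the center $x$, does not obviously propagate to nearby $y$: the geometry changes, and at points inside one of your bumps the supremum may well be attained. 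A secondary issue is that the ``fine-tuning'' you invoke to make $\mathcal{A}_{r_j}|f|(x)$ strictly increase to a finite limit is genuinely delicate---with $m_j\asymp 2^{-j}$ in dyadic shells the averages are essentially constant, not strictly increasing---and you have not specified parameters that work. The paper's locally constant construction sidesteps both difficulties: it needs no tuning and automatically gives $\mathcal{T}f=0$ on sets of positive measure.
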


With these facts   about the frequency function at hand, we turn to its applications. 
Our next   theorem relates the points of discontinuity of $\mathcal{M}f$ to the zero set of $\mathcal{T}f$ when $f$ is a simple function. We thus extract information about the  formation of discontinuities of $\mathcal{M}f$. It may well be possible that this theorem is true for a wider class of functions, but as our arguments rely crucially on the range of $f$ being a finite set, such a result is beyond  our reach. Also as  $\mathcal{M}f, \mathcal{T}f$ are both  nonlinear operators, classical approximation by simple functions argument of measure theory does not work either.

\begin{theorem}
Let  $f \in L^1(\mathbb{R})$ be a simple function. Let $x$ be a point at which $\mathcal{M}f$ is discontinuous. Then for every $r>0$, there exist $y\in (x-r,x+r)$ such that $\mathcal{T}f(y)=0$.
\end{theorem}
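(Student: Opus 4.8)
The plan is to read off, from a discontinuity of $\mathcal{M}f$ at $x$, the quantitative fact that $\mathcal{M}f(x)$ lies strictly below the essential supremum of $|f|$ on small intervals around $x$, and then to produce, arbitrarily close to $x$, a point at which this essential supremum is attained by averages over arbitrarily small radii (or by no radius at all); at such a point $\mathcal{T}f$ vanishes directly from its definition.

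First I would record two standard facts: the map $(z,r)\mapsto\mathcal{A}_r|f|(z)$ is jointly continuous on $\mathbb{R}\times\mathbb{R}^+$ (dominated convergence, since $|f|\in L^1$), so $\mathcal{M}f$ is a supremum of continuous functions, hence lower semicontinuous; and since $f$ is simple it is bounded, so $\mathcal{M}f$ is finite everywhere. Thus a discontinuity at $x$ is necessarily a downward jump: there are $\varepsilon_0>0$ and $z_k\to x$ with $\mathcal{M}f(z_k)>\mathcal{M}f(x)+\varepsilon_0=:\lambda$ (possible as $\mathcal{M}f$ is lower semicontinuous but not continuous at $x$). Picking $r_k$ with $\mathcal{A}_{r_k}|f|(z_k)>\lambda$, a short compactness argument forces $r_k\to0$: the case $r_k\to\infty$ is excluded by $\mathcal{A}_{r_k}|f|(z_k)\le\|f\|_1/(2r_k)$, and a finite limit $r_k\to r_*>0$ is excluded because joint continuity would give $\lambda\le\mathcal{A}_{r_*}|f|(x)\le\mathcal{M}f(x)<\lambda$. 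Now let $\beta$ be the essential supremum of $|f|$ over $(x-\rho_0,x+\rho_0)$; since $|f|$ takes only finitely many values, by shrinking $\rho_0$ we may assume $|f|\le\beta$ a.e.\ on $(x-\rho_0,x+\rho_0)$ and that $A:=\{|f|=\beta\}\cap(x-\rho_0,x+\rho_0)$ meets every interval $(x-\rho,x+\rho)$, $\rho>0$, in positive measure. Since $(z_k-r_k,z_k+r_k)\subseteq(x-\rho_0,x+\rho_0)$ for large $k$, we get $\lambda<\mathcal{A}_{r_k}|f|(z_k)\le\beta$, hence $\mathcal{M}f(x)<\beta$.

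The core step is the claim: for each $\rho\in(0,\rho_0)$ there is a Lebesgue density point $y$ of $A$ in $(x-\rho,x+\rho)$ with $\mathcal{T}f(y)=0$. Assume not. Any density point $y$ of $A$ has $\mathcal{A}_r|f|(y)\to\beta$ as $r\to0^+$ (the density of $A$ at $y$ is one and $|f|\le\beta$ nearby), hence $\mathcal{M}f(y)\ge\beta$. If $\mathcal{T}f(y)\ne0$ then $E_{f,y}\ne\emptyset$; choose $r(y)\in E_{f,y}$, so $\mathcal{A}_{r(y)}|f|(y)=\mathcal{M}f(y)\ge\beta$, and in particular $r(y)\le\|f\|_1/(2\beta)$. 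I would then show $r(y)\ge\rho_0-\rho$: if not, then $(y-r(y),y+r(y))\subseteq(x-\rho_0,x+\rho_0)$, forcing $\mathcal{M}f(y)=\mathcal{A}_{r(y)}|f|(y)\le\beta$, hence $=\beta$, hence $|f|=\beta$ a.e.\ on $(y-r(y),y+r(y))$ (the integrand $\beta-|f|$ is nonnegative there and has zero integral); but then every smaller radius also lies in $E_{f,y}$, so $\mathcal{T}f(y)=\inf E_{f,y}=0$, a contradiction. Thus $r(y)$ ranges in the fixed compact interval $[\rho_0-\rho,\ \|f\|_1/(2\beta)]\subset(0,\infty)$. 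Choosing density points $y_l\in A$ with $y_l\to x$ (available by the Lebesgue density theorem, since $A$ meets every neighbourhood of $x$ in positive measure) and a subsequence with $r(y_l)\to s_*$ in that interval, joint continuity gives $\mathcal{A}_{s_*}|f|(x)=\lim_l\mathcal{A}_{r(y_l)}|f|(y_l)\ge\beta$, so $\mathcal{M}f(x)\ge\beta$, contradicting the previous paragraph. This proves the claim, and the theorem follows (for $r\ge\rho_0$ apply the claim with $\rho_0/2$ in place of $\rho$).

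I expect the main obstacle to be the delicate interplay with the \emph{definition} of $\mathcal{T}f$ in the core step: a density point $y$ of $A$ with $\mathcal{M}f(y)=\beta$ need not automatically satisfy $\mathcal{T}f(y)=0$, since the supremum defining $\mathcal{M}f$ could a priori be attained only at a macroscopic scale while every small average stays strictly below $\beta$. Excluding this is exactly what the compactness argument ``based at $x$'' does --- it converts such a macroscopic minimizing radius into a radius at which $\mathcal{A}_r|f|(x)$ itself reaches $\beta$, contradicting $\mathcal{M}f(x)<\beta$. The other place where the hypothesis is genuinely used is that simplicity makes $\beta$ an essential supremum that is \emph{attained} on a set of positive measure near $x$, so that density points of $A$ cluster at $x$; for a general $L^1$ function this can fail.
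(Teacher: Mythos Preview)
Your proof is correct and shares its core mechanism with the paper's argument: both hinge on the compactness step that if points $y_l\to x$ carry maximizing radii bounded away from zero, joint continuity of $\mathcal{A}|f|$ transports the large average to $x$ and contradicts $\mathcal{M}f(x)<\beta$ (this is exactly the paper's Lemma~3). The genuine difference is organizational. The paper works globally with the full list of values $a_1<\cdots<a_n$ and runs an inductive descent: first it shows $|A_n\cap(x-r,x+r)|=0$, then deduces $\mathcal{M}f(x)<a_{n-1}$ via Lemma~3, then shows $A_{n-1}$ misses a smaller interval, and so on, shrinking the interval by a factor at each stage until all levels are exhausted. You bypass this iteration entirely by first stabilizing the local essential supremum: since $|f|$ takes finitely many values, the function $\rho\mapsto\operatorname{ess\,sup}_{(x-\rho,x+\rho)}|f|$ is eventually constant as $\rho\downarrow 0$, and calling that constant $\beta$ lets you work with the single level set $\{|f|=\beta\}$ from the start. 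What this buys you is a one-step argument in place of an $n$-step induction; what the paper's version buys is that it never needs to introduce or justify the stabilized $\beta$, making each individual step slightly more elementary. Both routes use simplicity in the same essential way --- to guarantee that the relevant top level is attained on a set of positive measure clustering at $x$ --- and both rely on the observation that if $\mathcal{A}_{r}|f|(y)$ equals the local maximum value on $(y-r,y+r)$ then $|f|$ is constant there and hence $\mathcal{T}f(y)=0$. A minor expository point: your step~3 (forcing $r_k\to 0$) and your final compactness step are really two instances of the same lemma, and you could state it once as the paper does.
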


Let $E\subset \mathbb{R}$. A point $x$ is called a point of density for  $E$ if 
\begin{equation*}
\lim_{r\rightarrow 0} \frac{|E\cap (x-r,x+r)|}{2r}=1.
\end{equation*} 
A point is called an  exceptional point for $E$ if it is not a density point for either $E$ or its complement $E^c$. It is well known that if both $E,E^c$  have nonzero measure then $E$ has an exceptional point, see \cite{gr,ku2,sz}.

A more general concept than that of density points is the concept of Lebesgue points. For  $f\in L^1_{loc}(\mathbb{R})$ we call $x$  a Lebesgue point of $f$ if there exist $c(x)$ such that 
\begin{equation*}\label{lp}
\lim_{r\rightarrow 0} \frac{1}{2r}\int_{-r}^{r}|f(x+t)-c(x)|dt=0.
\end{equation*}
It is well known that for almost every $x$ this equation  is satisfied with $c(x)=f(x)$, see \cite{fl,lin}. We combine the existence of exceptional points with Theorem 5 to prove the first part of the  following theorem. As opposed to  Theorem 5, the phenomenon observed in this  first part seems to be peculiar to simple functions, a relatively easy example showing this will be provided. Then we use topological arguments to prove the other direction of the theorem. This other direction is not true even for simple functions, as we will show.

\begin{theorem}
Let $f\in L^1(\mathbb{R})$ be a simple function. If $x$ is a point of discontinuity for $\mathcal{M}f$, then for every $r>0$ there exist $y\in (x-r,x+r)$ such that $y$ is not a Lebesgue point of $f$. Conversely let $f\in L^1(\mathbb{R})$ be a  characteristic function.  If $x$ is a non-Lebesgue point for $f$, then for every $r>0$ there exist $y\in (x-r,x+r)$ such that $\mathcal{M}f$ is discontinuous at $y$.
\end{theorem}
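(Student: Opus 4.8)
The plan is to prove the two implications separately, both by contradiction, and both resting on the interval‑localized version of the exceptional‑point result quoted in the introduction: if $A$ is measurable and $A,A^{c}$ both meet an open interval $J$ in positive measure, then $A$ has an exceptional point strictly inside $J$. This follows from the global statement by applying an intermediate‑value/chord argument to $t\mapsto|A\cap(\alpha,t)|$ on a subinterval of $J$ whose endpoints are a density point of $A$ and a density point of $A^{c}$ (an interior extremum of this function minus its chord produces a point where $A$ has upper density strictly positive and lower density strictly less than one).

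For the forward direction I would assume $\mathcal Mf$ is discontinuous at $x$ while, for contradiction, some $I=(x-r_{0},x+r_{0})$ consists entirely of Lebesgue points of $f$. First I record that, since $f$ is simple, an exceptional point $p$ of any level set $\{f=b\}$ is automatically a non‑Lebesgue point of $f$: if $f(p)=b$ then $p$ is not a density point of $\{f=b\}$, and if $f(p)\neq b$ then $p$ is not a density point of $\{f=b\}^{c}\supseteq\{f=f(p)\}$, so in either case $p$ fails to be a density point of its own level set. Hence no value $b$ can satisfy $0<|\{f=b\}\cap I|<|I|$, i.e.\ $f\equiv c$ a.e.\ on $I$ for some constant $c$. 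Now I bring in Theorem 5: it yields $y_{n}\to x$ with $\mathcal Tf(y_{n})=0$, so along suitable radii $r\to0$ one has $\mathcal A_{r}|f|(y_{n})\to\mathcal Mf(y_{n})$; since $f\equiv c$ a.e.\ near $y_{n}$ those averages equal $|c|$ for small $r$, forcing $\mathcal Mf(y_{n})=|c|$, and since likewise $\mathcal A_{\rho}|f|(x)=|c|$ for all small $\rho$ we get $\mathcal Mf(x)=|c|$. To finish I use a Lipschitz estimate: for $\rho$ bounded below by a fixed $\rho_{0}>0$ the maps $z\mapsto\mathcal A_{\rho}|f|(z)$ are uniformly Lipschitz near $x$ (constant $\le\|f\|_{\infty}/\rho_{0}$), while for $z$ near $x$ the small‑scale averages equal $|c|$; hence near $x$ the function $\mathcal Mf$ is the maximum of the constant $|c|$ and a supremum of uniformly Lipschitz functions, so it is continuous at $x$ — contradicting the hypothesis.

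For the converse, write $f=\chi_{E}$ with $|E|<\infty$ and note that $x$ is a non‑Lebesgue point of $\chi_{E}$ precisely when $x$ is an exceptional point of $E$. I would assume, for contradiction, that $\mathcal M\chi_{E}$ is continuous at every point of $J=(x-r,x+r)$. Using $|E|<\infty$ one checks that $\mathcal M\chi_{E}(y)=1$ if and only if $E$ has upper density $1$ at $y$; thus $\{\mathcal M\chi_{E}=1\}$ is relatively closed in $J$, and on any compact subinterval of $\{\mathcal M\chi_{E}<1\}$ almost no point is a density point of $E$, so $E$ is null there. The topological heart of the argument is then that a component $(\alpha,\beta)$ of the open set $\{\mathcal M\chi_{E}<1\}\cap J$ cannot have an endpoint $\alpha$ interior to $J$: such an $\alpha$ would satisfy $\mathcal M\chi_{E}(\alpha)=1$, hence have upper density $1$ at $\alpha$, yet $E$ null on $(\alpha,\beta)$ forces that upper density to be at most $\tfrac12$. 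So $\{\mathcal M\chi_{E}<1\}\cap J$ is empty or all of $J$; in the first case $E\supseteq J$ a.e.\ and in the second $E$ is null on $J$, and in either case every point of $J$ is a density point of $E$ or of $E^{c}$, contradicting that $x\in J$ is exceptional for $E$. Hence $\mathcal M\chi_{E}$ must be discontinuous somewhere in $(x-r,x+r)$.

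The main obstacle I anticipate is the forward direction, where one must fuse three not‑very‑compatible ingredients — Theorem 5, the exceptional‑point result, and a continuity property of $\mathcal Mf$ — and in particular must confront the degenerate case in which $f$ is a.e.\ constant near $x$: there the first two ingredients say nothing, and one genuinely needs the structural fact (the Lipschitz estimate above) that $\mathcal Mf$ is continuous at a point near which $f$ is a.e.\ constant. A smaller technical point, in both directions, is ensuring that the exceptional point one extracts lies strictly inside the relevant open interval rather than at an endpoint.
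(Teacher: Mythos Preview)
Your converse direction is essentially the paper's argument: define $U=\{\mathcal M\chi_E=1\}$, use continuity to make $U^c\cap J$ open, and examine an interior endpoint of one of its components. You add a detail the paper only asserts, namely that such an endpoint $\alpha$ gives a contradiction because $E$ is null on $(\alpha,\beta)$ and hence the upper density of $E$ at $\alpha$ is at most $\tfrac12$, while $\mathcal M\chi_E(\alpha)=1$ forces upper density $1$.

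Your forward direction is correct but follows a genuinely different path from the paper. The paper first identifies the single level set $A_j$ for which $x$ is a density point, uses Lemma~3 (a frequency-function statement) to rule out $A_j$ filling a neighborhood of $x$, and then finds an exceptional point of that one $A_j$ near $x$. You instead apply the localized exceptional-point result to \emph{every} level set, forcing $f\equiv c$ a.e.\ on $I$, and then derive continuity of $\mathcal Mf$ at $x$ from the uniform Lipschitz bound on $z\mapsto\mathcal A_\rho|f|(z)$ for $\rho\ge\rho_0$. Your route is more elementary---it never invokes Lemma~3 or the frequency function---while the paper's route stays closer to its theme. Two small points: the detour through Theorem~5 is unnecessary, since the Lipschitz estimate alone already shows that $\mathcal Mf$ is the maximum of the constant $|c|$ and a continuous function near $x$; and the intermediate claim $\mathcal Mf(x)=|c|$ is not justified as written (small-radius averages give only $\mathcal Mf(x)\ge|c|$), though it could be recovered via lower semicontinuity and in any case is not needed for your Lipschitz argument.
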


Even for characteristic functions it is possible for a point of discontinuity of $\mathcal{M}f$ to be a Lebesgue point of $f$, and  $\mathcal{M}f$ to be continuous at a non-Lebesgue point of $f$.  Examples of both  situations will be furnished. Therefore it is not possible to improve the theorem in this direction either.  

In the next section we  include examples to show that the operator $ \mathcal{T}$ is indeed very rough, and a small change of the function $f$ can lead to great changes in  $\mathcal{T}f$. Also included are functions $f$ for which a small change of $x$ can lead to great changes in  $\mathcal{T}f(x)$.   We will thus  gain insight into  the regularity properties of the frequency function. After these examples we will  prove the basic properties of the frequency function mentioned right after its definition. The rest of the article is devoted to proofs of our theorems: in the third section we will prove our first   theorem. To this end we will introduce certain auxiliary functions and investigate their properties. In the fourth section we will prove the next three theorems that concern the size of the frequency function.    The final section is reserved for  the last theorems.

As a last remark we note  that both the maximal function and the frequency function do not distinguish between a function and its absolute value, and further, the   Lebesgue points of a function  are  also   Lebesgue points for its absolute value. Therefore it suffices to prove all our results in this work only for nonnegative functions. Also if two functions are the same  almost everywhere, then    their images under both the maximal function and the frequency function and their Lebesgue points are the same. Thus if one of our results holds for one of these functions, it also holds for the other.


\section{Examples and certain basic properties}

\subsection{Examples}
We will now compute the frequency function $\mathcal{T}f$ for certain functions $f$ to get a sense of its behavior. We start with considering the zero function: let  $f_1(x)=0$ for almost every $x$. Clearly $ \mathcal{M}f_1(x)=\mathcal{T}f_1(x)=0$ for every $x$. Notice that both the maximal function and the frequency function remove whatever irregularities may occur due to the behavior on measure zero sets.  As our Theorem 3 shows this is the only function for which  the image under the frequency function is not  mostly greater than or similar to $|x|$.  

  We now consider the function $f_2(x)=\chi_{[-1,1]}(x)$. We have 
 \begin{equation*}
 \mathcal{M}f_2(x):= \begin{cases} 
 1 &  |x|<1 \\
 \frac{1}{|x|+1}& |x| \geq 1,
 \end{cases} \ \ \ \ \ \ \ \ \ \ \ \ \
 \mathcal{T}f_2(x):= \begin{cases} 
 0 & |x|\leq 1 \\
 {|x|+1}& |x| > 1.
 \end{cases}
 \end{equation*}
 As is clear for $|x|$ large, $\mathcal{T}f_2(x)$ is always like $|x|$. 
 
 In our third example we demonstrate that for some functions this is not so.
  \begin{equation*}
 f_3(x):= \begin{cases} 
  \frac{1}{n^2} &  2^n\leq x \leq 2^n+1, \  n\in \mathbb{N} \\
  0&  \text{elsewhere.}
  \end{cases}
  \end{equation*}
 Clearly this function is integrable, and yet $ \mathcal{T}f_3(x)=0$ whenever $2^n< |x| <2^n+1, $ for $n$ large. This is simply because the function $f_3(x)$ gets sparser and smaller as $x$ gets larger. We thus see that there can be points  $x$ arbitrarily distant from the origin for which $\mathcal{T}f_3(x)$ is not comparable to $|x|.$ Our  Theorem 3 says that nonetheless the density of such points decrease to zero.

 Our fourth example is actually a sequence of examples 
 \[f_{4,k}(x)=\frac{1}{k}\chi_{[-1,1]}(x),\]
 where $k\in \mathbb{N} $. Observe that 
  \begin{equation*}
  \mathcal{M}f_{4,k}(x):= \begin{cases} 
  1/k &  |x|<1 \\
  \frac{1}{k|x|+k}& |x| \geq 1,
  \end{cases} \ \ \ \ \ \ \ \ \ \ \ \ \
  \mathcal{T}f_{4,k}(x):= \begin{cases} 
  0 & |x|\leq 1 \\
  {|x|+1}& |x| > 1.
  \end{cases}
  \end{equation*}
 Notice that whereas $ \mathcal{M}f_{4,k}$ does depend on $k$,  $\mathcal{T}f_{4,k}$ is independent of it. As $k\rightarrow \infty$,  the examples $f_{4,k}$ converge to the zero function both pointwise and in $L^1$ sense, yet $ \mathcal{T}f_{4,k}$  never changes. This shows that even when two functions are close to each other pointwise or in $L^1$ sense their frequency functions can be very different.  
 
 Our fifth example is of fractal type. Let 
 \begin{equation*}
 f_5(x):=\chi_{(-1,0)}(x)+\sum_{n\in \mathbb{N}}\chi_{(2^{-n+1}-2^{-n-1},2^{-n+1})}(x).
 \end{equation*}
  This function is the characteristic function of  an open set. If $x$ is in this set then $\mathcal{T}f_{5}(x)=0$. But if $x=2^{-n+1}-2^{-n-1}$  for $n\geq 2$, then $\mathcal{T}f_{5}(x)=1-x$. Thus we see that on the interval $(0,\epsilon)$ the function $\mathcal{T}f_{5}(x)$  switches from 0 to $1-x$ infinitely often. This shows that the behavior of the image under the  frequency function can be highly irregular even on arbitrarily small intervals, and that it can show fractal type behavior.

 \subsection{Basic properties of the frequency function} Here we will demonstrate the two properties of the frequency function mentioned right after its definition. We first observe that if $f \in L^1(\mathbb{R})$
then
 \begin{equation}\label{bd}
 |\mathcal{A}_rf(x)|\leq \|f\|_1/2r, \quad \quad \text{and} \quad \quad  \lim_{r\rightarrow  \infty} \mathcal{A}_rf(x)= 0. 
 \end{equation}
Next  we introduce a well known result that will be repeatedly used in the rest of this work.  This is   Lemma 3.16 in  \cite{fl}, and a proof can be found there. 
\begin{lemma}
 Let  $f \in L^1(\mathbb{R})$. Then the function $\mathcal{A}f(x,r):\mathbb{R}\times \mathbb{R}^+\rightarrow \mathbb{C}$ is continuous.
\end{lemma}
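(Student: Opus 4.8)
The plan is to strip off the normalizing factor and reduce the claim to the continuity of an unnormalized integral, then establish that continuity by dominated convergence. Write $F(x,r):=\int_{x-r}^{x+r}f(t)\,dt=\int_{\mathbb{R}}f(t)\,\chi_{[x-r,x+r]}(t)\,dt$, so that $\mathcal{A}f(x,r)=F(x,r)/(2r)$. The map $(x,r)\mapsto 1/(2r)$ is continuous and finite on $\mathbb{R}\times\mathbb{R}^+$, so it suffices to prove that $F$ is continuous there; the product of the two is then continuous.

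For the continuity of $F$ I would argue sequentially, which is legitimate because $\mathbb{R}\times\mathbb{R}^+$ is metrizable. Fix $(x_0,r_0)$ with $r_0>0$ and take $(x_n,r_n)\to(x_0,r_0)$. For every $t\notin\{x_0-r_0,\ x_0+r_0\}$ one has $\chi_{[x_n-r_n,x_n+r_n]}(t)\to\chi_{[x_0-r_0,x_0+r_0]}(t)$: a point in the open interval $(x_0-r_0,x_0+r_0)$ eventually lies in $(x_n-r_n,x_n+r_n)$, and a point outside the closed interval $[x_0-r_0,x_0+r_0]$ is eventually outside $[x_n-r_n,x_n+r_n]$. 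Hence $f(t)\chi_{[x_n-r_n,x_n+r_n]}(t)\to f(t)\chi_{[x_0-r_0,x_0+r_0]}(t)$ for almost every $t$, with the uniform bound $|f|\in L^1(\mathbb{R})$, so the dominated convergence theorem gives $F(x_n,r_n)\to F(x_0,r_0)$, and therefore $\mathcal{A}f(x_n,r_n)=F(x_n,r_n)/(2r_n)\to F(x_0,r_0)/(2r_0)=\mathcal{A}f(x_0,r_0)$.

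A density variant works equally well and avoids dominated convergence: choosing $g\in C_c(\mathbb{R})$ with $\|f-g\|_1<\varepsilon$, one has $|F_{f-g}(x,r)|\le\|f-g\|_1<\varepsilon$ uniformly in $(x,r)$, while $F_g$ is uniformly continuous by the uniform continuity of the compactly supported $g$, and the triangle inequality then closes the estimate for $F_f$. In either approach the only points that genuinely require attention — the mild obstacle here — are that one must prove \emph{joint} continuity rather than mere separate continuity in $x$ and in $r$, and that the normalizing factor $1/(2r)$ is unbounded as $r\to 0^+$, so the reduction must respect the domain $\mathbb{R}\times\mathbb{R}^+$, on which $r$ stays locally bounded away from $0$. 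Since the statement is classical, recorded as Lemma 3.16 in \cite{fl}, I would in fact simply cite it, as the paper does.
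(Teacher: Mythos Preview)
Your proposal is correct, and in fact it does more than the paper: the paper gives no proof at all, simply citing the result as Lemma~3.16 in \cite{fl}. Your dominated-convergence argument is the standard one and is sound, and since you also note that you would ultimately just cite the reference, your approach aligns exactly with the paper's.
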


We are now ready to obtain our properties. The proofs utilize \eqref{bd}, Lemma 1 and  the least upper bound property of $\mathbb{R}$.

\begin{proposition}
Let $f\in L^1(\mathbb{R})$.  If $\mathcal{T}f(x)>0$ then  $\mathcal{T}f(x)\in \{r>0:\mathcal{M}f(x)=\mathcal{A}_r|f|(x)\}$. 
\end{proposition}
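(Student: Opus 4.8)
The plan is to exploit the elementary fact that the infimum of a set is approached by a sequence of points of that set, combined with the continuity of the averaging function supplied by Lemma 1. First I would observe that the hypothesis $\mathcal{T}f(x)>0$ forces the set $E_{f,x}=\{r>0:\mathcal{M}f(x)=\mathcal{A}_r|f|(x)\}$ to be nonempty: by the very definition of $\mathcal{T}f$, were $E_{f,x}$ empty we would have $\mathcal{T}f(x)=0$. In particular this also yields $\mathcal{M}f(x)<\infty$, since by \eqref{bd} each average $\mathcal{A}_r|f|(x)$ is finite, so an infinite supremum could not be attained and $E_{f,x}$ would again be empty. Consequently $t:=\mathcal{T}f(x)=\inf E_{f,x}$ is a well-defined strictly positive real number, and our goal is to show $t\in E_{f,x}$, i.e. $\mathcal{A}_t|f|(x)=\mathcal{M}f(x)$.

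Next I would produce a sequence witnessing the infimum. Since $t$ is the greatest lower bound of $E_{f,x}$, for each $n\in\mathbb{N}$ we may choose $r_n\in E_{f,x}$ with $t\le r_n< t+1/n$, so that $r_n\to t$. Membership $r_n\in E_{f,x}$ means precisely $\mathcal{A}_{r_n}|f|(x)=\mathcal{M}f(x)$ for every $n$. Because $t>0$, the point $(x,t)$ lies in $\mathbb{R}\times\mathbb{R}^+$, so Lemma 1 applies and the map $r\mapsto\mathcal{A}f(x,r)$ — equivalently, after the reduction to nonnegative $f$ noted at the end of the introduction, $r\mapsto\mathcal{A}_r|f|(x)$ — is continuous at $t$. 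Hence $\mathcal{A}_{r_n}|f|(x)\to\mathcal{A}_t|f|(x)$ as $n\to\infty$. Comparing this with the constant value $\mathcal{A}_{r_n}|f|(x)=\mathcal{M}f(x)$ gives $\mathcal{A}_t|f|(x)=\mathcal{M}f(x)$, that is, $t=\mathcal{T}f(x)\in E_{f,x}$, as claimed.

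I do not anticipate a genuine obstacle here; the two points that warrant a line of care are the preliminary observations that $\mathcal{T}f(x)>0$ really does guarantee both $E_{f,x}\neq\emptyset$ and $\mathcal{M}f(x)<\infty$, and the fact that Lemma 1 is being invoked only at a radius bounded away from $0$ — which is exactly the reason the hypothesis is stated as $\mathcal{T}f(x)>0$ rather than merely $E_{f,x}\neq\emptyset$, since continuity of $\mathcal{A}f(x,\cdot)$ is asserted on $\mathbb{R}\times\mathbb{R}^+$ and would be useless at a limiting radius $0$.
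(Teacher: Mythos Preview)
Your proposal is correct and follows essentially the same route as the paper: pick a sequence $r_n\in E_{f,x}$ with $\mathcal{T}f(x)\le r_n\le \mathcal{T}f(x)+n^{-1}$, then pass to the limit using the continuity of $\mathcal{A}|f|(x,\cdot)$ from Lemma~1. Your additional remarks about $\mathcal{M}f(x)<\infty$ and about why the hypothesis $\mathcal{T}f(x)>0$ is needed for Lemma~1 to apply are correct and add clarity, but the underlying argument is identical to the paper's.
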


\begin{proof}
This means   $ E_{f,x}$ is nonempty and   $\mathcal{T}f(x)=\inf  E_{f,x} $, so there exist a sequence  $\{r_n\}_{n\in \mathbb{N}}$ such that $\mathcal{T}f(x)\leq r_n\leq \mathcal{T}f(x) + n^{-1}$, and $\mathcal{M}f(x)=\mathcal{A}_{r_n}|f|(x)=\mathcal{A}|f|(x,r_n)$. Then letting $n\rightarrow \infty $ and applying Lemma 1 completes the proof.
\end{proof}

\begin{proposition}
Let $f\in L^1(\mathbb{R})$.  If $\mathcal{T}f(x)=0$ then there exist a sequence $\{r_n\}_{n\in \mathbb{N}}$ such that $r_n \rightarrow 0$ and $\mathcal{A}_{r_n}|f|(x) \rightarrow  \mathcal{M}f(x)$ as $n\rightarrow \infty$. 
\end{proposition}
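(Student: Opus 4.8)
The plan is to split according to the two ways the equality $\mathcal{T}f(x)=0$ can arise: either $E_{f,x}$ is nonempty with $\inf E_{f,x}=0$, or $E_{f,x}=\emptyset$. In the first case there is nothing to do beyond unwinding definitions: the infimum being $0$ produces a sequence $\{r_n\}\subset E_{f,x}$ with $r_n\to 0$, and each $r_n\in E_{f,x}$ means exactly $\mathcal{A}_{r_n}|f|(x)=\mathcal{M}f(x)$, so $\mathcal{A}_{r_n}|f|(x)\to\mathcal{M}f(x)$ trivially.

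The real content is the case $E_{f,x}=\emptyset$, where the supremum defining $\mathcal{M}f(x)$ is not attained and we must nonetheless exhibit radii tending to $0$ along which it is approached. I would first rule out degeneracies: if $\|f\|_1=0$ or $\mathcal{M}f(x)=0$, then $\mathcal{A}_r|f|(x)=0=\mathcal{M}f(x)$ for every $r>0$, so $E_{f,x}=(0,\infty)$, contradicting emptiness; hence $\|f\|_1>0$ and $\mathcal{M}f(x)>0$. Now argue by contradiction: suppose $\limsup_{r\to 0^+}\mathcal{A}_r|f|(x)<\mathcal{M}f(x)$, so there are $\delta>0$ and $r_0>0$ with $\mathcal{A}_r|f|(x)\le\mathcal{M}f(x)-\delta$ for all $r\in(0,r_0)$. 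Using the bound $|\mathcal{A}_rf(x)|\le\|f\|_1/2r$ from \eqref{bd} (applied to $|f|$), pick $R>r_0$ so large that $\mathcal{A}_r|f|(x)<\mathcal{M}f(x)/2$ for all $r\ge R$. Then $\sup\{\mathcal{A}_r|f|(x):r\in(0,r_0)\cup[R,\infty)\}\le\mathcal{M}f(x)-\min(\delta,\mathcal{M}f(x)/2)<\mathcal{M}f(x)$.

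It remains to examine the compact interval $[r_0,R]$. By Lemma~1 the map $r\mapsto\mathcal{A}|f|(x,r)$ is continuous, hence attains its maximum on $[r_0,R]$ at some $r^{\ast}$. Writing $\mathcal{M}f(x)=\sup_{r>0}\mathcal{A}_r|f|(x)$ as the larger of its supremum over $(0,r_0)\cup[R,\infty)$ and its maximum over $[r_0,R]$, and recalling the former is strictly below $\mathcal{M}f(x)$, we conclude $\mathcal{A}_{r^{\ast}}|f|(x)=\mathcal{M}f(x)$, i.e.\ $r^{\ast}\in E_{f,x}$ --- contradicting $E_{f,x}=\emptyset$. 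Therefore $\limsup_{r\to 0^+}\mathcal{A}_r|f|(x)=\mathcal{M}f(x)$, which yields the desired sequence $r_n\to 0$ with $\mathcal{A}_{r_n}|f|(x)\to\mathcal{M}f(x)$. The main obstacle is precisely this last case, and the mechanism driving it is that non-attainment of the supremum, combined with continuity on compacta and the decay in \eqref{bd} at infinity, forces the supremum to be approached only as $r\to 0$.
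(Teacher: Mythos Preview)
Your proof is correct and follows essentially the same route as the paper's: continuity of averages (Lemma~1), the decay $\mathcal{A}_r|f|(x)\le\|f\|_1/2r$ from \eqref{bd}, and a compactness argument to force near-maximizing radii toward $0$ when $E_{f,x}=\emptyset$. The paper extracts a bounded approximating sequence $r_k$ and applies Bolzano--Weierstrass, showing any subsequential limit $r>0$ would lie in $E_{f,x}$; you phrase the same mechanism via the extreme value theorem on $[r_0,R]$. These are equivalent packagings.

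One small omission: your contradiction argument uses the expressions $\mathcal{M}f(x)-\delta$ and $\mathcal{M}f(x)/2$, which tacitly assume $\mathcal{M}f(x)<\infty$. The paper treats the case $\mathcal{M}f(x)=\infty$ separately: any sequence with $\mathcal{A}_{r_n}|f|(x)\ge n\|f\|_1$ is forced by \eqref{bd} to satisfy $r_n\le 1/(2n)\to 0$. You should insert a sentence dispatching this case; it is in fact easier than the finite one, since \eqref{bd} immediately bounds $\sup_{r\ge r_0}\mathcal{A}_r|f|(x)<\infty$, so a finite $\limsup$ at $0$ would make $\mathcal{M}f(x)$ finite outright.
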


\begin{proof}
 This is clear if $f=0$ almost everywhere, or if $ E_{f,x}$ is nonempty. We thus assume otherwise, in which case $\mathcal{M}f(x)$ is either a positive real number or infinite  for any $x\in \mathbb{R}$. If it is infinite,  there must be  a sequence $\{r_n\}_{n\in \mathbb{N}}$ with $ n\|f\|_1\leq \mathcal{A}_{r_n}|f|(x).$ But then by \eqref{bd} we have $ n\|f\|_1\leq \|f\|_1/2r_n $, which in turn yields
 $r_n\leq n^{-1}$. Thus $\{r_n\}_{n\in \mathbb{N}}$ is  a sequence with desired properties.

  If, on the other hand, $Mf(x)$ is a positive real number,  we must  have  values $r_k$ such that $ (1-2^{-k})\mathcal{M}f(x)\leq \mathcal{A}_{r_k}|f|(x) \leq \|f\|_1/2r_k$, with the last inequality coming from \eqref{bd}.
This yields $ r_k \leq {\|f\|_1}/ { \mathcal{M}f(x)}.$
Then by the Bolzano-Weierstrass theorem we must have a convergent subsequence $\{r_{k_n}\}_{n\in \mathbb{N}}$, with  limit  $r$ in $[0,\|f\|_1/\mathcal{M}f(x)].$ This $r$ cannot be positive, for in that case by Lemma 1 we have $\mathcal{A}_{r}|f|(x)=\mathcal{M}f(x)$, which is a contradiction. Therefore this subsequence converges to 0, and we have $\mathcal{A}_{r_{k_n}}|f|(x) \rightarrow  \mathcal{M}f(x)$ as $n\rightarrow \infty.$

\end{proof}

     
     \section{The measurability of the frequency function}
     
     In this section we prove Theorem 1.  We recall that the measurability of $\mathcal{M}f$ follows easily from the  continuity of averages. Indeed
     \begin{equation*}
     \mathcal{M}f^{-1}((\alpha,\infty])=\bigcup_{r>0} \mathcal{A}_r|f|^{-1}((\alpha,\infty)),
     \end{equation*}
     and since $\mathcal{A}_r|f|$ are continuous, $ \mathcal{M}f^{-1}((\alpha,\infty])$ is not only measurable but also open. Therefore $\mathcal{M}f$ is not only measurable but also lower semi-continuous. Unfortunately  arguments of this type are not available for the frequency function. Indeed, examples $f_2$ and $f_5$ of section 2 clearly demonstrate that $\mathcal{T}f$ need not be lower or upper semicontinuous. We therefore need a different method. We  will write the frequency function as the  limit of a sequence of auxiliary functions for which measurability can be  proved using countability arguments.
     Let $f\in L^1(\mathbb{R})$. We define for  $k,l\in \mathbb{N}$ the sets
      \begin{equation*}
      E_{f,x,k,l}:=\{r\in \mathbb{Q}^+: r\geq 2^{-l}, \ \ \mathcal{A}_r|f|(x) +2^{-k}  \geq \mathcal{M}f(x)\},
      \end{equation*}
       and the operators
\begin{equation*}
\mathcal{T}_{k,l}f(x):=\begin{cases} \inf E_{f,x,k,l}  & \text{if the set is nonempty} \\ 0 & \text{else.}  \end{cases}
\end{equation*}
  Clearly  $\mathcal{T}_{k,l}f:\mathbb{R}\rightarrow \mathbb{R}$  are well defined. If $E_{f,x,k,l}$ is nonempty, then its infimum $\mathcal{T}_{k,l}f(x)$ may not be rational, but still satisfies  
  \begin{equation}\label{bd2}
   \mathcal{T}_{k,l}f(x)\geq 2^{-l}, \quad \text{and}  \quad  \mathcal{A}_{\mathcal{T}_{k,l}f(x)}|f|(x) +2^{-k}  \geq \mathcal{M}f(x).
  \end{equation}
  If $f$ is not zero almost everywhere, then $ E_{f,x,k,l}$ are bounded for $k$ large enough. Indeed if we pick $k$ such that $2^{-k}<\mathcal{M}f(x)/2$, then by \eqref{bd}   for  $r\in  E_{f,x,k,l}$ we have
  \begin{equation}\label{bd3}
     r\leq \|f\|_1/\mathcal{M}f(x).
    \end{equation}
    The next proposition will exploit the countability of $E_{f,x,k,l}$ to  show that  $\mathcal{T}_{k,l}f$ are  measurable.

\begin{proposition}
 Let $f\in L^1(\mathbb{R})$. The function $\mathcal{T}_{k,l}f$ is measurable for every $k,l\in \mathbb{N}$.
\end{proposition}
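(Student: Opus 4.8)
The plan is to use the fact that the radii in $E_{f,x,k,l}$ are rational, so that membership there is governed by countably many conditions, each of which is measurable in $x$. First I would record the two inputs: for each fixed $r\in\mathbb{Q}^+$ the average $x\mapsto \mathcal{A}_r|f|(x)$ is continuous by Lemma 1 (applied to $|f|\in L^1(\mathbb{R})$), hence Borel, and $x\mapsto\mathcal{M}f(x)$ is measurable, as recalled at the start of this section. Consequently, for every $r\in\mathbb{Q}^+$ the sets
\[
G_r:=\{x:\mathcal{A}_r|f|(x)+2^{-k}\ge\mathcal{M}f(x)\},\qquad H_r:=G_r^{c}=\{x:\mathcal{A}_r|f|(x)+2^{-k}<\mathcal{M}f(x)\}
\]
are measurable.

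Next I would isolate the region where $E_{f,x,k,l}=\emptyset$, on which $\mathcal{T}_{k,l}f$ vanishes by definition. Writing $A:=\{x:E_{f,x,k,l}\neq\emptyset\}$, one has
\[
A=\bigcup_{\substack{r\in\mathbb{Q}^+\\ r\ge 2^{-l}}}G_r,
\]
a countable union of measurable sets, so $A$ is measurable and $\mathcal{T}_{k,l}f\equiv 0$ on the measurable set $A^{c}$. It thus remains to check that $\{x:\mathcal{T}_{k,l}f(x)>\alpha\}$ is measurable for each $\alpha\in\mathbb{R}$. Since on $A$ we have $\mathcal{T}_{k,l}f\ge 2^{-l}$ by \eqref{bd2}, this is immediate when $\alpha<2^{-l}$: the set equals $\mathbb{R}$ if $\alpha<0$ and equals $A$ if $0\le\alpha<2^{-l}$.

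For $\alpha\ge 2^{-l}$ the key step is to rephrase $\inf E_{f,x,k,l}>\alpha$ as a countable condition. Because $E_{f,x,k,l}\subseteq[2^{-l},\infty)\cap\mathbb{Q}$, for $x\in A$ one has $\inf E_{f,x,k,l}>\alpha$ if and only if there is a rational $\beta>\alpha$ with $E_{f,x,k,l}\cap[2^{-l},\beta)=\emptyset$: if $\inf E_{f,x,k,l}>\alpha$ one picks a rational $\beta\in(\alpha,\inf E_{f,x,k,l}]$, and conversely such a $\beta$ forces every element of $E_{f,x,k,l}$ to be $\ge\beta>\alpha$. Translating the emptiness into the sets $H_r$ yields
\[
\{x:\mathcal{T}_{k,l}f(x)>\alpha\}=A\cap\bigcup_{\substack{\beta\in\mathbb{Q}\\ \beta>\alpha}}\ \bigcap_{\substack{r\in\mathbb{Q}^+\\ 2^{-l}\le r<\beta}}H_r,
\]
which is measurable, being assembled from $A$ and the $H_r$ by countably many unions, intersections and complements. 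Hence $\mathcal{T}_{k,l}f$ is measurable.

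I do not expect a genuine obstacle; the argument is essentially bookkeeping around countability, made possible because the defining radii are rational. The one subtlety worth stating carefully is the reformulation just used: $\inf E_{f,x,k,l}>\alpha$ must be passed through an auxiliary rational threshold $\beta$ and a countable intersection over $r$, and should \emph{not} be identified with $E_{f,x,k,l}\cap(-\infty,\alpha]=\emptyset$, since the infimum may be approached without being attained; the rationality of the radii is exactly what makes this reformulation exact.
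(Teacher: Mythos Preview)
Your argument is correct and follows essentially the same approach as the paper: both proofs hinge on the measurability of the sets $G_r=S_r$ for rational $r$ and express the relevant level sets of $\mathcal{T}_{k,l}f$ via countable set operations on these. The only cosmetic difference is that the paper works with the closed level sets $\mathcal{T}_{k,l}f^{-1}([\alpha,\infty))$, which lets it write the answer directly as $\big(\bigcup_{r\ge\alpha}S_r\big)\setminus\big(\bigcup_{2^{-l}\le s<\alpha}S_s\big)$ without your auxiliary rational threshold $\beta$; your choice of strict level sets $\{\mathcal{T}_{k,l}f>\alpha\}$ is what forces the extra $\beta$-layer, and you handle it correctly.
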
  
  
\begin{proof}
We fix  $k,l\in \mathbb{N}$. It suffices to show that for any $\alpha$ the set $\mathcal{T}_{k,l}f^{-1}([\alpha,\infty))$ is measurable. When $\alpha < 2^{-l}$  this set  is either $\mathbb{R}$ or $\mathcal{T}_{k,l}f^{-1}([2^{-l},\infty)),$ therefore we assume that $\alpha \geq 2^{-l}.$   We
consider for every $r\in \mathbb{Q}^+$ the set
\begin{equation*}
S_r:=\{x\in \mathbb{R}:     \mathcal{A}_r|f|(x) +2^{-k}  \geq \mathcal{M}f(x)    \}.
\end{equation*}
Clearly these sets are measurable. We claim that 
\begin{equation*}
\mathcal{T}_{k,l}f^{-1}([2^{-l}\infty)) =\bigcup_{r\in [2^{-l},\infty)\cap\mathbb{Q} } S_r,
\end{equation*}
and for $\alpha>2^{-l}$ that
\begin{equation*}
\mathcal{T}_{k,l}f^{-1}([\alpha,\infty)) =\Big(\bigcup_{r\in [\alpha,\infty)\cap\mathbb{Q} } S_r\Big)\setminus \Big( \bigcup_{s\in [2^{-l},\alpha)\cap\mathbb{Q} } S_s \Big).
\end{equation*}
We will verify the second claim, the first follows from the same arguments with even less difficulty.  Let $x\in \mathcal{T}_{k,l}f^{-1}([\alpha,\infty))$. So   there must be a rational $r\geq  \mathcal{T}_{k,l}f(x)\geq \alpha$ with  $\mathcal{A}_r|f|(x) +2^{-k}  \geq \mathcal{M}f(x)$, and there can be no rational  $2^{-l}\leq s<\alpha$ with $\mathcal{A}_s|f|(x) +2^{-k}  \geq \mathcal{M}f(x)$. This proves inclusion in one direction.
Conversely let $x$ be in the set on the right hand side. Then $x\in S_r$  for some $r\in [\alpha,\infty)\cap\mathbb{Q},$ and $x\notin S_s$  for any $[2^{-l},\alpha)\cap\mathbb{Q}.$ Thus $E_{f,x,k,l}$ is not empty, but can contain no element $2^{-l}\leq s<\alpha$, and this  means  $\mathcal{T}_{k,l}f(x)=\inf E_{f,x,k,l}\geq \alpha$. This concludes the proof.

\end{proof}     
     
 We now are ready to  show the measurability of $\mathcal{T}f$ by writing it as a limit. For each fixed  $l$, we prove that   $\lim_{k\rightarrow \infty}\mathcal{T}_{k,l}f$ converge to   a real valued  function $\mathcal{T}_{l}f$. We then show that $\lim_{l\rightarrow \infty}\mathcal{T}_{l}f=\mathcal{T}f.$

\begin{proof}
 If $f=0$ almost everywhere, then  $\mathcal{T}_{k,l}f=2^{-l}$, and  therefore both the existence of $\mathcal{T}_{l}f$ and their convergence to $\mathcal{T}f$ are clear. We therefore assume otherwise. 
 
 We first concentrate on the  existence of $\mathcal{T}_{l}f$, and therefore fix   $l\in \mathbb{N}$. We observe that for any $x$ we have 
  \begin{equation}\label{bd4}
  E_{f,x,1,l}\supseteq E_{f,x,2,l}\supseteq E_{f,x,3,l}\supseteq \ldots
  \end{equation}   
   If for $x$,  there exists $k_x\in \mathbb{N}$ such that $E_{f,x,k_x,l}$ is empty, then so is $E_{f,x,k,l}$ for all $k\geq k_x$, and therefore $\mathcal{T}_{k,l}f(x)=0$ for such $k$. In this case  $\lim_{k\rightarrow \infty}{T}_{k,l}f(x)=0.$ If for $x$ all of $E_{f,x,k,l}$ are nonempty, then $\mathcal{T}_{k,l}f(x)=\inf E_{f,x,k,l}$, and we have the chains 
\begin{equation*}
  \inf E_{f,x,1,l}\leq \inf E_{f,x,2,l}\leq \inf E_{f,x,3,l}\leq  \ldots
  \end{equation*}
\begin{equation*}
\mathcal{T}_{1,l}f(x) \leq\mathcal{T}_{2,l}f(x)\leq \mathcal{T}_{3,l}f(x)\leq  \ldots
  \end{equation*}
 Thus  $\lim_{l\rightarrow \infty}\mathcal{T}_{k,l}f(x)$ exists. Moreover, since by   \eqref{bd3} for large $k$  the sets $E_{f,x,k,l}$ are bounded above by a common bound,  this limit is finite. 

Hence we can define  a real valued measurable function 
  $\mathcal{T}_{l}f(x):=\lim_{k\rightarrow \infty}\mathcal{T}_{k,l}f(x)$, and  
  reduce to proving 
  $\lim_{l\rightarrow \infty}\mathcal{T}_{l}f(x)= \mathcal{T}f(x)$ for every real $x$.
This we will do  in cases. For any $x$ exactly one of the following is true:\\
{\bf I.} The set $E_{f,x}$ is empty, and thus $\mathcal{T}f(x)=0$.\\
{\bf II.} The set $E_{f,x}$ is nonempty with $\mathcal{T}f(x)=\inf E_{f,x} >0 $.\\
{\bf III.} The set $E_{f,x}$ is nonempty with $\mathcal{T}f(x)=\inf E_{f,x} =0 $.

 {\bf  Case I.} We will see that for such $x$ the sets in the chain \eqref{bd4} are empty after some point, and therefore for any $l\in\mathbb{N}$ we have $\mathcal{T}_{l}f(x)=0$. We fix $l\in\mathbb{N}.$  Owing to Lemma 1 and  \eqref{bd} the function  $\mathcal{A}|f|(x,r)$ attains its supremum on $\{x\}\times [2^{-l},\infty)$, at some $(x,r_x)$. 
 As $E_{f,x}$ is empty we have  $\mathcal{A}_{r_x}|f|(x)<\mathcal{M}f(x).$  
 For  any $k$ with $2^{-k}<\mathcal{M}f(x)-\mathcal{A}_{r_x}|f|(x)$ the set $E_{f,x,k,l}$ is empty, and we are done. 

 {\bf  Case II.} We will see that $\mathcal{T}_{l}f(x)=\mathcal{T}f(x)$ for any $l$ with $2^{-l}<\mathcal{T}f(x)/2$. Fix one such $l$. By Proposition 1  we have $\mathcal{T}f(x)\in  E_{f,x}$.  Then by Lemma 1, the sets    $E_{f,x,k,l}$ are nonempty for every $k\in \mathbb{N}$, and  actually contain elements smaller than $\mathcal{T}f(x)$ for every $k$.  Thus $\mathcal{T}_{k,l}f(x)=\inf E_{f,x,k,l}\leq \mathcal{T}f(x)$, and therefore $2^{-l}\leq \mathcal{T}_{l}f(x)\leq \mathcal{T}f(x)$. But  \eqref{bd2} together with 
 Lemma 1 yields  $\mathcal{A}_{\mathcal{T}_{l}f(x)}|f|(x)  \geq \mathcal{M}f(x)$ if we take the limit $k\rightarrow \infty.$ This 
 requires $\mathcal{T}_lf(x)\geq \mathcal{T}f(x)$ and we are done.

  {\bf Case III.} Since $E_{f,x}$ is nonempty but $\inf E_{f,x}=0$, there must be a sequence $\{r_n\}_{n\in \mathbb{N}}\subseteq E_{f,x}$   converging to zero. Fix $n\in \mathbb{N}$, and let  $l$ be such that $2^{-l}\leq r_n/2$. By Lemma 1 for each $k$, the set  $E_{f,x,k,l}$ must contain an  element $r_k<r_n$. Thus $E_{f,x,k,l}$ are nonempty, and $\mathcal{T}_{k,l}f(x)=\inf E_{f,x,k,l}\leq r_n$. Taking limits we deduce that $\mathcal{T}_{l}f(x)\leq r_n,$  which in turn leads to   $\limsup_{l\rightarrow \infty}\mathcal{T}_{l}f(x)\leq r_n$.
Letting $n\rightarrow \infty$, this means $\limsup_{l\rightarrow \infty}\mathcal{T}_{l}f(x)=0.$
Therefore $\lim_{l\rightarrow \infty}\mathcal{T}_{l}f(x)=0.$

\end{proof}

 \section{ The size of the frequency function}
 \subsection{Proof of Theorem 2}
 We use the same idea as in the proof of the analogous theorem in \cite{t2}, that  if the set was unbounded, we could extract a sequence of points around each of which the integral of $|f|$ would be comparable to $\|f\|_1.$ The analogous result in \cite{t2} proves finiteness, while here finiteness is  wrong, and we have to make do with boundedness.
 
  \begin{proof}  The result is clear if $f$ is almost everywhere  zero. Therefore we will assume otherwise. Assume to the contrary that the set is not bounded. Then at least one of $S_C\cap \mathbb{R}^+, S_C\cap \mathbb{R}^-$ must be unbounded: we will assume $S_C\cap \mathbb{R}^+$ is unbounded, the other case follows from the same arguments.   Let 
 \[A:=\frac{C+1}{C-1},\ \ \ \ \ B:=\frac{C+1}{C},\ \ \ \ \ D:=\frac{C-1}{C}. \]
 Since $f\in L^1(\mathbb{R})$ there must be some $m\in \mathbb{N}$ with  
 \[\int_{-m}^m |f(x)|dx\geq \frac{\|f\|_1}{2}.\]
 Owing to our unboundedness assumption on $S_C\cap \mathbb{R}^+$  we can find  an element  $ x_1\in S_C$ with $x_1>m$. Again by the same assumption there exists  $x_2\in S_C$ with  $x_{2}>2Ax_1$. Proceeding thus 
 we extract a sequence   $\{x_i\}_{i\in \mathbb{N}}\subseteq S_C$ with   $x_{i+1}>2Ax_i$ for  each natural number $i$. Then from Proposition 1 we have 
\begin{equation*}
\begin{aligned}
\mathcal{M}f(x_i) =\mathcal{A}_{\mathcal{T}f(x_i)}|f|(x_i)  \leq \frac{1}{2\mathcal{T}f(x_i)}\int_{ [Dx_i,Bx_i]}|f(x)|dx.
\end{aligned}
\end{equation*} 
This implies 
\[\frac{x_i}{C}\cdot \mathcal{M}f(x_i) \leq \int_{ [Dx_i,Bx_i]}|f(x)|dx.\] 
We observe that since $A=B/D$, we have $Dx_{i+1}>2Bx_i$, and thus the intervals $[Dx_i,Bx_i]$ never intersect. Hence we must have
\begin{equation}\label{t1}
 \sum_{i\in \mathbb{N}} \frac{x_i}{C}\cdot \mathcal{M}f(x_i) \leq \sum_{i\in \mathbb{N}} \int_{ [Dx_i,Bx_i]}|f(x)|dx\leq \|f\|_1.
\end{equation}
On the other hand, as 
 $x_i> m$ we  have
\[\mathcal{M}f(x_i) \geq A_{2x_i}|f|(x_i) = \frac{1}{4x_i}\int_{-2x_i}^{2x_i}|f(x_i+x)|dx=\frac{1}{4x_i}\int_{-x_i}^{3x_i}|f(x)|dx\geq \frac{\|f\|_1}{8x_i} .\]
  Then from \eqref{t1} we obtain the contradiction
 \[\sum_{i\in \mathbb{N}} \frac{\|f\|_1}{8C}\leq \|f\|_1.\]
Therefore  $S_C\cap \mathbb{R}^+$ must be bounded.
\end{proof}

\subsection{Proof of Theorem 3} 
The proof of Theorem 3 uses the ideas  introduced in its analogue in \cite{t2}, but also accounts for the difference that now the maximal function may be infinite for some points in the domain. We will again use the Vitali covering lemma, which we state  below.

\begin{lemma}[Vitali Covering Lemma]
Let $\{B_i\}_{i=1}^m$ be a finite collection of open intervals  with  finite length. Let $E\subseteq \mathbb{R}$ be a subset    covered by these intervals. Then we can find a disjoint subcollection  $\{B_{i_k}\}_{k=1}^n$ of $\{B_i\}_{i=1}^m$ such that 
\[\sum_{k=1}^n |B_{i_k}|\geq \frac{|E|}{3}.\]
\end{lemma}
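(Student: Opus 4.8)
The plan is to run the classical greedy selection argument. First I would relabel the given intervals $B_1,\dots,B_m$ so that their lengths are non-increasing, $|B_1|\geq |B_2|\geq\dots\geq |B_m|$. Then, running through the indices in this order, I select index $j$ precisely when $B_j$ is disjoint from every interval already selected; writing the selected indices as $i_1<i_2<\dots<i_n$, the subcollection $\{B_{i_k}\}_{k=1}^n$ is by construction finite and pairwise disjoint, hence a legitimate candidate for the conclusion.

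The heart of the matter is the claim that each $B_j$, selected or not, is contained in the concentric interval $3B_{i_k}$ of three times the length, for a suitable selected index $i_k$. If $j$ is itself selected this is immediate. Otherwise $B_j$ must meet some previously selected $B_{i_k}$ with $i_k<j$: had it been disjoint from all of them, it would have been selected. By the ordering, $i_k<j$ forces $|B_{i_k}|\geq |B_j|$, and then a one-line computation with the intervals written in centre-radius form (if two intervals intersect and one is no longer than the other, the triangle inequality places the shorter one inside the triple-length dilate of the longer about its own centre) gives $B_j\subseteq 3B_{i_k}$.

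Granting this, $E\subseteq\bigcup_{j=1}^m B_j\subseteq\bigcup_{k=1}^n 3B_{i_k}$, so by monotonicity and finite subadditivity of Lebesgue (outer) measure,
\[
|E|\leq\sum_{k=1}^n |3B_{i_k}|=3\sum_{k=1}^n |B_{i_k}|,
\]
which rearranges to the asserted inequality $\sum_{k=1}^n |B_{i_k}|\geq |E|/3$. I do not anticipate a real obstacle here: the selection terminates because the collection is finite, and the only step that needs any care is the geometric containment $B_j\subseteq 3B_{i_k}$, which is where the finiteness of the lengths (so that "triple length" is meaningful) and the length-ordering are both used.
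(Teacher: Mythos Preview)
Your argument is the standard greedy selection proof and is correct; the only detail to note is that the paper itself does not prove this lemma but simply cites Rudin, whose proof is exactly the argument you give. So your proposal matches the referenced proof.
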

 A proof can be found in \cite{ru}.
We   now prove  Theorem 3.

\begin{proof}  The classical weak boundedness result for the maximal function states
\begin{equation*}
|\{x:\mathcal{M}f(x)>\lambda    \}|  \leq \frac{3}{\lambda}\|f\|_1,
\end{equation*}
and this implies the set $S_{\infty}$ of points $x$ where $\mathcal{M}f(x)=\infty$ has zero Lebesgue measure.
Therefore in any set of positive measure we can find points at which  $\mathcal{M}f$ is finite. 

We define  $A,B,D$  exactly as in the proof of Theorem 2. We will use the notations
	\begin{equation*}
	K_N=\left\{x\in \mathbb{R}:|x|\leq N, \ 
	\mathcal{T}f(x) \leq \frac{|x|}{C}\right\}\setminus S_{\infty}, \ \ \ K_{N}^+=K_N\cap \mathbb{R}^+, \ \ \ K_{N}^-=K_N\cap \mathbb{R}^-.
	\end{equation*}
	 We will prove  
\begin{equation}\label{kn}
\lim_{N\rightarrow \infty} \frac{  |K_{N}^+|}{N}= 0,
\end{equation} 
 and  the same arguments give the analogous  result for $K_{N}^-$. The theorem clearly follows from  these two results.

 We assume to the contrary that \eqref{kn} is wrong, that 
 there exists a small $\epsilon>0$ such that 
$| K_{N_i}^+|/N_i\geq \epsilon$
for a strictly increasing  sequence $\{N_i\}_{i\in\mathbb{N} }\subseteq \mathbb{N}$.  We pick a natural number $M$   such that 
 \[\int_{-M}^M |f(x)|dx\geq \frac{\|f\|_1}{2},\]
and $M>10^{10^{10A\epsilon^{-10}}}.$
We extract a subsequence from  $\{N_i\}_{i\in\mathbb{N} }$ as follows. Let $N_{i_1}$  be such that $N_{i_1}\geq M$, and  let $N_{i_{k+1}}\geq 10A \epsilon^{-1}N_{i_{k}}$ for every $k\geq 1$. We fix $k\geq 1$. We have
\[ |K_{N_{i_{2k}}}^+\setminus K_{N_{i_{2k-1}}}^+|\geq \frac{9\epsilon N_{i_{2k}}}{10}\geq 9N_{i_{2k-1}}.\]
 For $x\in K_{N_{i_{2k}}}^+\setminus K_{N_{i_{2k-1}}}^+ $ we can find positive real numbers $r_x\leq x/C$ satisfying $\mathcal{A}_{r_x}|f|(x)\geq \mathcal{M}f(x)/2$, by taking $r_x=\mathcal{T}f(x)$ if $\mathcal{T}f(x)$ is positive, and by using Proposition 2 if $\mathcal{T}f(x)=0$ . Also, since $x>M$ 
\[\mathcal{M}f(x)\geq \mathcal{A}_{2x}|f|(x)=\frac{1}{4x}\int_{-2x}^{2x}|f(x+t)|dt =\frac{1}{4x}\int_{-x}^{3x}|f(t)|dt\geq \frac{\|f\|_1 }{8x}. \]
We combine these two to  obtain 
\begin{equation}\label{fr} 2r_x\mathcal{A}_{r_x}|f|(x)=\int_{-r_x}^{r_x}|f(x+t)|dt\geq \frac{r_x }{8x}\|f\|_1.
 \end{equation}
The intervals $(x-r_x,x+r_x)$ cover the set $K_{N_{i_{2k}}}^+\setminus K_{N_{i_{2k-1}}}^+$. By the inner regularity of the Lebesgue measure we can find   a compact subset $K$ of this set with at least half its measure, and  there exists a finite subcover of $K$ consisting of intervals $(x-r_x,x+r_x)$.    By the Vitali covering lemma we have a subset $x_1,x_2,\ldots x_{p_k}$ for which the intervals $(x_i-r_{x_i},x_i+r_{x_i}), \ 1\leq i\leq p_k$ are disjoint, and
\[\sum_{i=1}^{p_k}2r_{x_i} \geq \frac{1}{3}|K|\geq\frac{1}{6}| K_{N_{i_{2k}}}^+\setminus K_{N_{i_{2k-1}}}^+|\geq \frac{9\epsilon N_{i_{2k}}}{60}. \]
Combining with \eqref{fr} yields
\begin{equation*}
\begin{aligned}
\sum_{i=1}^{p_k} \int_{-r_{x_i}}^{r_{x_i}}|f(x_i+t)|dt &\geq \sum_{i=1}^{p_k}\frac{r_{x_i} }{8x_i}\|f\|_1\\
&\geq \frac{\|f\|_1 }{8N_{i_{2k}}}\sum_{i=1}^{p_k}r_{x_i}\\
&\geq \frac{\|f\|_1 }{8N_{i_{2k}}}\frac{9\epsilon N_{i_{2k}}}{120}\\
&\geq \frac{ \epsilon \|f\|_1}{120}.
\end{aligned}
\end{equation*}
But  the intervals $(x_i-r_{x_i},x_i+r_{x_i})$ are disjoint, therefore we  have
\begin{equation*}
\int_{DN_{i_{2k-1}}}^{BN_{i_{2k}}}|f(t)|dt\geq \sum_{i=1}^{p_k} \int_{-r_{x_i}}^{r_{x_i}}|f(x_i+t)|dt \geq 
 \frac{ \epsilon \|f\|_1}{120}.
\end{equation*}
As $[DN_{i_{2k-1}},BN_{i_{2k}}]$ are disjoint for each natural number $k$, summing over $k$ we have
\begin{equation*}
\|f\|_1\geq \sum_{k\in \mathbb{N}} \ \ \int_{DN_{i_{2k-1}}}^{BN_{i_{2k}}}|f(t)|dt \geq 
\sum_{k\in \mathbb{N}} \frac{ \epsilon \|f\|_1}{120}
\end{equation*}
which is a contradiction.

\end{proof}

\subsection{Proof of Theorem 4}
The function we provide is analogous to the one in \cite{t2}.  We make use of the sparsity of the function to show that the frequency function vanishes.  
\begin{proof} We may assume  $\varepsilon$ is much smaller than 1. Let  $\mathcal{A}_{r,s}f$ denote the average of $f$ over $[r,s]$. For any integer $m\geq 10$ we denote  $m':=m\log^{1+\varepsilon/2} m$, and  $m'':=m\log^{1+\varepsilon} m.$  We define 
\[f(x):=\sum_{m=10}^{\infty} \frac{1}{m'} \chi_{ ( m'', m''+1)}(x). \]

Let $M>10^{10^{10\varepsilon^{-10}}}$ be a natural number, and let $N$ be the smallest integer not less than $M''$. Consider $m\in [M/2,M]\cap \mathbb{N}$ and values  $x\in (m'', m''+1) $. For $\delta>0$ small enough    we of course have
 $\mathcal{A}_{\delta}f(x)=f(x)=1/m'.$
We will show that $\mathcal{A}_rf(x)$ cannot be larger than this for any $r$. We have
 $\|f\|_1=C_{\varepsilon}\leq 2/\varepsilon$, 
therefore, if $r\geq x-10$, then   \eqref{bd} leads to  
$\mathcal{A}_rf(x)\leq {\|f\|_1}/{x}\leq {2/\varepsilon m''}.$
Considering our choice of $M$, this is less than $1/m'$. Clearly the case $r\leq 10$ is  also impossible. Thus remains the case $10<r<x-10$. In this case observe that
\begin{equation*}
\begin{aligned}
\mathcal{A}_rf(x)=\frac{1}{2r}\int_{-r}^rf(x+t)dt \leq \frac{1}{r}\int_{m''-r}^{m''+1}f(t)dt\leq \frac{1}{10m'}+ {2}\mathcal{A}_{m''-r,m''}f
\end{aligned}
\end{equation*}
 A moments consideration makes it clear that to maximize the last expression it is most advantageous to  choose $r$ such that $m''-r=k''$ for some $10\leq k<m$. Furthermore it is best to choose $k=10$, for the nonzero values of $f$ get smaller and also sparser. Thus the last average is not greater than $\mathcal{A}_{10'',m''}f$, which in turn  is bounded by $ 4/\varepsilon m''.$
Therefore we can conclude that $\mathcal{A}_{\delta}f(x)=\mathcal{M}f(x)$ for any $\delta $ small enough, and  $\mathcal{T}f(x)=0.$  
Hence we have
\[{|\left\{x:|x|\leq N, \ 
		\mathcal{T}f(x) =0\right\}|}\geq \frac{M}{4}\geq \frac{1}{8}N /\log^{1+\varepsilon}N.\]

\end{proof}


\section{Connections and applications}

In this section we prove our last two theorems and thereby establish connections of the frequency function with various other concepts of harmonic analysis.

\subsection{Proof of Theorem 5}
The proof relies crucially of the range of $f$ being finite. Starting from the highest value $f$ takes, which since we may assume $f$ to be nonnegative makes sense, we iterate two arguments: that if  the average of $f$  over a set is equal to its maximum on that set,  then $f$  must have that value almost everywhere on that set, and that a discontinuity of $\mathcal{M}f$ cannot be approached by a sequence of points over  which $\mathcal{M}f$ is greater than its value at the discontinuity and $\mathcal{T}f$ is bounded below by a positive constant. While the first of these arguments is clear, the second requires a more rigorous expression which we provide below.

\begin{lemma}
Let $f\in L^1(\mathbb{R})\cap  L^{\infty}(\mathbb{R})$, and let $\epsilon>0$.  If $\{x_n\}_{n\in \mathbb{N}}$ is a sequence converging to $x$ with $\mathcal{M}f(x_n)\geq \mathcal{M}f(x)+\epsilon$ for all $n\in \mathbb{N}$, then the set $\{\mathcal{T}f(x_n): n\in \mathbb{N}\}$  cannot be bounded below by a positive constant.
\end{lemma}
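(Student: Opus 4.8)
The plan is to argue by contradiction, reducing everything to the continuity of the averages (Lemma 1) together with Proposition 1. Suppose the conclusion fails, so there is a $\delta>0$ with $\mathcal{T}f(x_n)\geq \delta$ for every $n\in\mathbb{N}$, and set $r_n:=\mathcal{T}f(x_n)$. Since each $r_n$ is positive, Proposition 1 gives $r_n\in E_{f,x_n}$, that is, $\mathcal{M}f(x_n)=\mathcal{A}_{r_n}|f|(x_n)$ for all $n$.

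The next step is to trap the radii $r_n$ in a fixed compact subinterval of $\mathbb{R}^+$. On the one hand $r_n\geq\delta$ by assumption. On the other hand, since $f\in L^\infty(\mathbb{R})$ we have $\mathcal{M}f(y)\leq\|f\|_\infty<\infty$ for every $y$, so $\mathcal{M}f(x)$ is a finite number and, by hypothesis, $\mathcal{A}_{r_n}|f|(x_n)=\mathcal{M}f(x_n)\geq\mathcal{M}f(x)+\epsilon>0$. Combining this with the bound $\mathcal{A}_{r_n}|f|(x_n)\leq\|f\|_1/2r_n$ from \eqref{bd} yields $r_n\leq \|f\|_1/\big(2(\mathcal{M}f(x)+\epsilon)\big)=:R$. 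Hence $r_n\in[\delta,R]$ for all $n$.

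Now I would apply the Bolzano--Weierstrass theorem to pass to a subsequence along which $r_{n_j}\to r_\ast$ for some $r_\ast\in[\delta,R]$; in particular $r_\ast>0$, so $(x,r_\ast)\in\mathbb{R}\times\mathbb{R}^+$ and Lemma 1 applies there. Since also $x_{n_j}\to x$, the continuity of $\mathcal{A}f$ on $\mathbb{R}\times\mathbb{R}^+$ gives
\begin{equation*}
\mathcal{M}f(x_{n_j})=\mathcal{A}f(x_{n_j},r_{n_j})\longrightarrow \mathcal{A}f(x,r_\ast)=\mathcal{A}_{r_\ast}|f|(x).
\end{equation*}
Each term on the left is at least $\mathcal{M}f(x)+\epsilon$, so the limit satisfies $\mathcal{A}_{r_\ast}|f|(x)\geq\mathcal{M}f(x)+\epsilon$, contradicting $\mathcal{A}_{r_\ast}|f|(x)\leq\mathcal{M}f(x)$, which holds by the very definition of the maximal function. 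This contradiction proves the lemma.

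There is no serious obstacle here; the only points that need care are using the $L^\infty$ hypothesis to ensure $\mathcal{M}f(x)$, and hence $R$, is finite, and using the lower bound $r_n\geq\delta$ precisely so that the limiting radius $r_\ast$ stays bounded away from $0$, where the continuity statement of Lemma 1 is not available.
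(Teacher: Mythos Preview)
Your proof is correct and follows essentially the same route as the paper's: assume a positive lower bound, use Proposition~1 and \eqref{bd} to trap the $\mathcal{T}f(x_n)$ in a compact interval of $\mathbb{R}^+$, extract a convergent subsequence via Bolzano--Weierstrass, and then invoke the continuity of averages (Lemma~1) to derive a contradiction with the definition of $\mathcal{M}f(x)$. The only cosmetic difference is that the paper bounds the radii above by $\|f\|_1/2\epsilon$ (using $\mathcal{M}f(x_n)\geq\epsilon$), whereas you use the slightly sharper $\|f\|_1/\bigl(2(\mathcal{M}f(x)+\epsilon)\bigr)$, for which you correctly note the need for $\mathcal{M}f(x)<\infty$ from the $L^\infty$ hypothesis.
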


\begin{proof}
Suppose $r>0$ is such a lower bound. By Proposition 1, and \eqref{bd}, for each $n\in \mathbb{N}$ we have  
\begin{equation*}
\epsilon \leq \mathcal{M}f(x_n)=\mathcal{A}_{\mathcal{T}f(x_n)}|f|(x_n)\leq {\|f\|_1}/{2\mathcal{T}f(x_n)},
\end{equation*}
 which  implies $\mathcal{T}f(x_n)\in [r,\|f\|_1/2\epsilon]$ for each $n\in \mathbb{N}$. By the Bolzano-Weierstrass theorem there is a subsequence $\{x_{n_k}\}_{k\in \mathbb{N}}$ for which $\mathcal{T}f(x_{n_k})$ converges to $r'\in [r,\|f\|_1/2\epsilon].$
Then by Lemma 1 we have as $k\rightarrow \infty$
\begin{equation*}
 \mathcal{M}f(x)+\epsilon \leq  \mathcal{M}f(x_{n_k})=\mathcal{A}_{\mathcal{T}f(x_{n_k})}|f|(x_{n_k})\rightarrow \mathcal{A}_{r'}|f|(x)\leq \mathcal{M}f(x),
\end{equation*}
a contradiction.
\end{proof}

We now present the proof of our theorem.

\begin{proof}
If $f$ is zero almost everywhere, then $\mathcal{M}f$ is never discontinuous, therefore we assume otherwise.  Owing to this and  our remarks at the end of the introduction we may write
\[f=\sum_{i=1}^n a_i\chi_{A_i} \]
where $0<a_1<a_2<\ldots<a_n$,  and $A_i$ are disjoint sets that  have   positive finite measure. 
 
Assume to the contrary that there exist an interval $(x-r,x+r)$ of positive radius  $r$ in which  
     the frequency function is never zero. Since $\mathcal{M}f$ is lower semicontinuous, there exist a sequence of points $\{x_k\}_{k\in \mathbb{N}}\subset (x-r,x+r)$ converging to $x$ with $\mathcal{M}f(x_k)\geq \mathcal{M}f(x)+\epsilon$.

 Let $z\in (x-r,x+r)$ be a point with  $\mathcal{A}_sf(z)\rightarrow f(z)$ as $s\rightarrow 0$. If   $f(z)=a_n$,  that $a_n$ is the greatest value $f$ can attain leads first to the equality   $\mathcal{A}_{\mathcal{T}f(z)}f(z)=\mathcal{M}f(z)=a_n$, and then  further  to the conlusion that within   $(z-\mathcal{T}f(z),z+\mathcal{T}f(z))$ the function $f$ must be $a_n$ almost everywhere. But this   implies $\mathcal{T}f(z)=0$, a contradiction.  Thus $|A_n\cap (x-r,x+r)|=0.$ 

We must therefore have $\mathcal{M}f(x)<a_{n-1}$, for otherwise the  elements of our sequence $\{x_k\}_{k\in \mathbb{N}}$   would satisfy   $\mathcal{M}f(x_k)\geq a_{n-1}+\epsilon$, which implies,  by the conclusion of the last paragraph, for $k$ large enough $\mathcal{T}f(x_k)\geq 2r/3$, and this contradicts Lemma 3. 

If $|A_{n-1}\cap (x-r/3^i,x+r/3^i) |>0$ for every $i\in \mathbb{N}$,  we can extract a point $z_i$   from each of these sets satisfying $\mathcal{A}_sf(z_i)\rightarrow f(z_i)=a_{n-1}$ as $s\rightarrow 0.$  Therefore  $\mathcal{A}_{\mathcal{T}f(z_i)}f(z_i)=\mathcal{M}f(z_i)\geq a_{n-1}$.  If for a natural number $i$ we have  $\mathcal{T}f(z_i)\leq 2r/3$, then     $f=a_{n-1}$ almost everywhere in $ (z_i-\mathcal{T}f(z_i),z_i+\mathcal{T}f(z_i))$, and this contradicts our assumption that the frequency function  is never zero in $(x-r,x+r)$. Therefore $\mathcal{T}f(z_i)> 2r/3$ for all $i \in \mathbb{N}.$ But this in its turn contradicts Lemma 3. Hence there must be a natural number $i_1$ for which   $|A_{n-1}\cap (x-r/3^{i_1},x+r/3^{i_1}) |=0$. 

We now repeat the  arguments of the last two paragraphs for each $1<m<n$ to first show that $\mathcal{M}f(x)<a_{n-m}$, and then that $|A_{n-m}\cap (x-r/3^{i_{m}},x+r/3^{i_{m}})|=0$ with $i_1<i_2\ldots<i_{n-1}$. Then for large enough $k$ the elements of the sequence $\{x_k\}_{k\in \mathbb{N}}$ must satisfy $\mathcal{T}f(x_k)>r/3^{k_{n-1}+1}$, and this contradicts  Lemma 3. Thus our assumption that the frequency function is never zero in $(x-r,x+r)$ must be wrong.

\end{proof}

\subsection{Proof of Theorem 6} For  the existence of non-Lebesgue points we rely on Lemma 3 and  the existence of exceptional points. We will assume the existence of a radius $r>0$ for which all points in  $(x-r,x+r)$ are Lebesgue points, and proceed to obtain a contradiction by locating an exceptional point in this interval, and proving that it cannot be a Lebesgue point. For the existence of discontinuities of $\mathcal{M}f$ we assume the existence of a radius $r>0$ for which  $\mathcal{M}f$ is continuous  in  $(x-r,x+r)$, and use topological arguments. After the proof we give an example showing that it is not possible to extend this theorem to non-simple functions. Then along with a heuristic explanation we give another  example making clear that it is not possible to obtain a full converse in this theorem. Finally, to  show the impossibility of improving upon   this  theorem  in  another direction,  we provide two more examples demonstrating that 
even for characteristic functions a  point of discontinuity of $\mathcal{M}f$   may well be a Lebesgue point of $f$, and at a non-Lebesgue point of $f$, we may  have $\mathcal{M}f$   continuous.

\begin{proof}
If $f$ is zero almost everywhere, then $\mathcal{M}f$ is never discontinuous,  therefore we may  assume otherwise.  Owing to this and  our remarks at the end of the introduction we may write
\[f=\sum_{i=0}^n a_i\chi_{A_i} \]
where $0=a_0<a_1<a_2<\ldots<a_n$,  and $A_i$ are disjoint sets that when $i>0$ have    finite positive  measure.  We let $b$ to be the minimum distance between any two of these coefficients. Since $\mathcal{M}f$ is discontinuous at $x$, there exist an $\epsilon >0$ and a sequence $\{x_k\}_{k\in \mathbb{N}}$  converging to $x$ with $\mathcal{M}f(x_k)\geq \mathcal{M}f(x)+\epsilon.$

We observe that if  $y$ is a Lebesgue point for this function, that is if   
\[
\lim_{s\rightarrow 0} \frac{1}{2s}\int_{y-s}^{y+s}|f(t)-c|dt=0, 
\]
then $c=a_j$ for some $j.$  For we have  
\begin{equation*}
\begin{aligned}
\frac{1}{2s}\int_{y-s}^{y+s}|f(t)-c|dt&=\sum_{i=0}^n|a_i-c|\frac{|A_i \cap[y-s,y+s]|}{2s}
&\geq \min_{i}|a_i-c| ,
\end{aligned}
\end{equation*}
from which our observation is immediate. We further observe from this that $y$   is a point of density for  $A_j$ if $c=a_j$.

We assume to the contrary that there exists  a radius $r>0$ for which all points in  $(x-r,x+r)$ are Lebesgue points of $f$. Thus in particular  $x$ is a Lebesgue point, which as we observed above means it is a point of density for $A^j$   for some $j$. Therefore $\mathcal{M}f(x)\geq a_j$, and   $|A_j\cap (x-s,x+s)|>0$ for any positive $s$. If we  have $|A_j\cap (x-s,x+s)|=2s$ some $s$,  then $\mathcal{M}f(x_k)\geq a_j+\epsilon $ implies that for $k$ large enough  $\mathcal{T}f(x_k)\geq s/2$, and this contradicts Lemma 3. Therefore   $0<|A_j\cap (x-s,x+s)|<2s$ for any positive $s$.

As $x$ is  a point of density for $A_j$ we can find a radius $r'<r$, such that $|A_j\cap (x-r',x+r')|>3r'/2$. Therefore $A_j\cap (x-r',x)$ and $A_j\cap (x,x+r')$ both have measure at least $r'/2$, and thus both of them have density points, let $u\in A_j\cap (x-r',x)$ and $v\in A_j\cap (x,x+r')$ be such points. The  set $A_j^c\cap (u,v)$ have positive measure by the conclusion of the last paragraph. Therefore it has an exceptional point $y$, and as $(-\infty,u]\cup [v,\infty) $ are density points for $(A_j^c\cap (u,v))^c$  we must have $y\in (u,v)$. Owing to  this $y$ is also an exceptional point for $A_j$.

But as we assumed all points in $(x-r,x+r)$ to be Lebesgue points for $f$, this $y$ must also be a Lebesgue point, from which it follows that it is a point of density for some $A_l$. Clearly $l=j$ is not possible. To see that  $l\neq j$ is also not possible, observe that   in this case  $A_l\subseteq A_j^c$, therefore $y$ is  a point of density for $A_j^c$. Thus our assumption must be wrong, and $(x-r,x+r)$  contains a non-Lebesgue point.

We now turn our attention to the other direction. If $f$ is zero almost everywhere, then all points are Lebesgue, therefore we may  write $f=\chi_A$ where $0<|A|<\infty.$ 

Assume to the contrary that there exist an $r>0$ such that $\mathcal{M}f$ is continuous in $(x-r,x+r).$  We observe that  as $x$  is a non-Lebesgue point $0<|A\cap (x-s,x+s)|<2s$ for every $s>0$. Let $U=\{z\in \mathbb{R}:\mathcal{M}f(z)=1  \}$. Clearly if $z$ is a point of density for $A$, then it is in $U$. On the other hand, if $z$ is a point of density for $A^c$, it cannot be in $U.$ Hence we have $|(A\setminus U)\cup (U\setminus A)|=0$. As we assumed $\mathcal{M}f$ to be continous on $(x-r,x+r),$ the set  $(x-r,x+r)\cap U^c$ is open. It can neither be  empty, nor all of $(x-r,x+r)$, for this would contradict   $0<|A\cap (x-r,x+r)|<2r$. Therefore it is the union of an at most countable collection of disjoint open intervals,  one $(a,b)$ of which is   such that either $a\neq x-r$ or $b\neq x+r$. We let $y$ to be the endpoint for which this is true. This means $\mathcal{M}f(y)<1$. But as $y\in U$  this is a contradiction.

\end{proof}

 We now  show that Theorem 6 is not valid for non-simple functions. We let $\phi(x):=(-|x|+1)\chi_{[-1,1]}(x),$ and 
 \[f_6(x):=\sum_{k=1}^{\infty}\phi(2^{4k}x-2^{3k}).  \]
 This function is a sequence of isosceles triangles of height 1 and base length   $2^{-4k+1}$. It  is continuous everywhere except at the origin. Since continuity at $x$ implies that $x$ is a Lebesgue point,   all points in $\mathbb{R}-\{0\}$ must be Lebesgue points. Furthermore, as the triangles get thinner very fast, the origin is also a Lebesgue point. But   clearly $\mathcal{M}f_6$ is discontinuous at the origin. 

The maximal function is calculated by taking supremum over averages of all positive radii, and is  of global nature, whereas the  Lebesgue points are determined via a limit of averages of radii converging to zero, and   is local.  Obtaining information regarding global phenomena from local phenomena is of course much  harder than doing the converse, and most of the time impossible. A full converse in our theorem is not possible exactly  due  to this reason, as the following example makes clear. Let 
\[f_7(x):=\chi_{(-1,0)}+100 \chi_{(1,2)}.  \]
Clearly $0$ is a non-Lebesgue point of $f_7$, but $\mathcal{M}f_7$ is continuous around $0$.
But when we restrict ourselves to  characteristic functions we obtain  global control over values the function can take, and this allowed  us to prove the partial converse. 
 
Let us consider the following function
\[f_8(x):=\sum_{k\in \mathbb{N}}\chi_{(2^{-k}-2^{-2k-1},2^{-k})}(x).  \] 
 Heuristically this  amounts to considering  the  intervals $(2^{-k-1},2^{-k}),$ dividing them into $2^{k}$ pieces and taking the rightmost piece. Therefore from each dyadic interval we are taking less and less, and   this makes $0$ a Lebesgue point. But clearly $\mathcal{M}f_8$ is discontinuous at $0$.

The last example is more interesting. Let $a_0=0, a_1=1$ and then $a_{k+1}:=a_k+2^{-k(k+1)/2}$. This sequence clearly converges to a limit $a<2$.  Let  $b_k:=(a_k+a_{k+1})/2$, and define
\[f_9(x):=\chi_{(a,a+1)}(x)+\sum_{k=0}^{\infty}\chi_{(b_k,a_{k+1})}(x).\]  
Essentially this means  taking the right halves of the intervals $(a_k,a_{k+1})$, the lengths of which decrease at an ever increasing pace.   We have $\mathcal{A}_{a-a_k}f_9(a)=3/4$ for each $k$,   showing that $a$ is not a Lebesgue point, while the averages $\mathcal{A}_{a-b_k}f_9(a)$ increase to $1$ as $k$ increases, implying $\mathcal{M}f_9(a)=1$. This, by lower semi-continuity of $\mathcal{M}f_9$  and $f_9$ being a characteristic function, implies the continuity of  $\mathcal{M}f_9$  at the point $a$.

\end{document}